\documentclass[10pt]{amsart}

\usepackage{lmodern}
\usepackage[T1]{fontenc}

\usepackage{amsmath, amsthm, amscd, amssymb, xcolor}
\definecolor{dblue}{rgb}{0,0,.6}
\usepackage[colorlinks=true, linkcolor=dblue, citecolor=dblue, filecolor = dblue, menucolor = dblue, urlcolor = dblue]{hyperref}
\usepackage{tikz-cd} 
\usetikzlibrary{arrows}

\usepackage[all,cmtip]{xy}

\textwidth	= 161mm
\textheight	= 221mm
\evensidemargin	= 0mm
\oddsidemargin	= 0mm
\voffset	= -15mm

\linespread{1.2}
\pagestyle{plain}

\newcommand{\bbZ}{\mathbb{Z}}
\newcommand{\bbQ}{\mathbb{Q}}
\newcommand{\bbR}{\mathbb{R}}
\newcommand{\bbC}{\mathbb{C}}
\newcommand{\bbH}{\mathbb{H}}

\newcommand{\bbP}{\mathbb{P}}
\renewcommand{\AA}{\mathcal{A}}
\newcommand{\CC}{\mathcal{C}}
\newcommand{\DD}{\mathcal{D}}

\newcommand{\HH}{\mathcal{H}}
\newcommand{\LL}{\mathcal{L}}
\newcommand{\MM}{\mathcal{M}}

\newcommand{\OO}{\mathcal{O}}

\newcommand{\VV}{\mathcal{V}}

\newcommand{\XX}{\mathcal{X}}
\newcommand{\YY}{\mathcal{Y}}
\newcommand{\ZZ}{\mathcal{Z}}

\newcommand{\frgt}{\mathfrak{g}_{\mathrm{tot}}}

\newcommand{\frsl}{\mathfrak{sl}}

\newcommand{\frso}{\mathfrak{so}}

\renewcommand{\ge}{\geqslant}

\newcommand{\fldk}{\mathsf{k}}

\newcommand{\catMmot}{\mathsf{(Mot_{A})}}
\newcommand{\catMab}{\mathsf{(Mot_A^{ab})}}

\newcommand{\catVarC}{\mathsf{(Var/}\mathbb{C}\mathsf{)}}
\newcommand{\catAlg}[1]{\mathsf{(GrAlg/}#1\mathsf{)}}

\newcommand{\bfM}{\mathbf{M}}
\newcommand{\bfH}{\mathbf{H}}

\newcommand{\st}{\enskip |\enskip}
\newcommand{\sdot}{{\raisebox{0.16ex}{$\scriptscriptstyle\bullet$}}}

\newcommand{\emrp}{\mathrm{End}}

\newcommand{\im}{\mathrm{im}}

\newcommand{\ii}{i}

\newcommand{\Cl}{\mathcal{C}l}

\newcommand{\Spin}{\mathrm{Spin}}
\newcommand{\lrarr}{\longrightarrow}
\newcommand{\hrarr}{\hookrightarrow}
\newcommand{\Gal}{\mathrm{Gal}}

\newtheorem{defn}{Definition}[section]
\newtheorem{prop}[defn]{Proposition}
\newtheorem{thm}[defn]{Theorem}
\newtheorem{lem}[defn]{Lemma}
\newtheorem{cor}[defn]{Corollary}

{\theoremstyle{remark}

}

\makeatletter
\@addtoreset{equation}{section}
\makeatother

\title{Deformation principle and Andr\'e motives of projective hyperk\"ahler manifolds}

\author{Andrey Soldatenkov}
\address{Institut f\"ur Mathematik, Humboldt-Universit\"at zu Berlin, Unter den Linden 6, 10099 Berlin}
\email{soldatea@hu-berlin.de}
\date{\today}
\subjclass[2010]{primary 14C30, 14J32; secondary 14F42} 

\thanks{}


\begin{document}

\begin{abstract}
Let $X_1$ and $X_2$ be deformation equivalent projective hyperk\"ahler manifolds.
We prove that the Andr\'e motive of $X_1$ is abelian if and only if the Andr\'e motive of $X_2$ is abelian.
Applying this to manifolds of $\mbox{K3}^{[n]}$, generalized Kummer and OG6 deformation types,
we deduce that their Andr\'e motives are abelian. As a consequence, we prove that all Hodge
classes in arbitrary degree on such manifolds are absolute. We discuss applications to the Mumford-Tate conjecture,
showing in particular that it holds for even degree cohomology of such manifolds.
\end{abstract}

\maketitle

\tableofcontents

\section{Introduction}

In this paper, we study Andr\'e motives of projective hyperk\"ahler manifolds.
By a hyperk\"ahler manifold we mean a compact simply connected K\"ahler manifold $X$
such that $H^0(X,\Omega^2_X)$ is spanned by a symplectic form.
We generalize the results of \cite{A2} and \cite{Sc}, showing that for most of
the known deformation types of hyperk\"ahler manifolds their Andr\'e motives are abelian.

\subsection{Andr\'e motives and hyperk\"ahler manifolds}
Andr\'e motives were introduced in \cite{A1} as a refinement of Deligne's motives \cite{DMOS}.
They form a semi-simple Tannakian category, whose construction we briefly recall
in section \ref{sec_mot}. The motives of abelian varieties generate a full Tannakian subcategory
whose objects are called abelian motives. The theory of Andr\'e motives has found
applications to the study of various arithmetic and Hodge-theoretic questions
about algebraic varieties. Recall the theorem of Deligne \cite{DMOS}
stating that any Hodge cohomology class on an abelian variety is absolute Hodge.
More generally, it was shown in \cite{A1} that for a projective manifold $X$
whose Andr\'e motive is abelian, all Hodge classes on $X$ are absolute.
This shows that one part of the Hodge conjecture holds for varieties
with abelian motives. Another application is related to the Mumford-Tate conjecture,
which predicts a relation between the Mumford-Tate groups and the Galois group
action on the cohomology of a projective variety. We review the absolute
Hodge classes and the Mumford-Tate conjecture in more detail in sections \ref{sec_abs}
and \ref{sec_mt} below. We recommend \cite{M1} for a general overview of
the recent developments in this area. We remark that the Mumford-Tate conjecture
for all known deformation classes of hyperk\"ahler manifolds was independently
proven in \cite{FFZ}, the proof relying on Theorem \ref{thm_main} below.

The main new tool used in this paper is the generalized Kuga-Satake
construction for hyperk\"ahler manifolds, which was introduced in \cite{KSV}. For any projective hyperk\"ahler
manifold $X$, this construction gives an embedding of the cohomology groups
of $X$ into the cohomology of an abelian variety, which respects the Hodge structures.
Therefore, our main goal is to prove that the Kuga-Satake embedding lifts
to the category of Andr\'e motives. To do this, we need to show
that the cohomology class defining the embedding is motivated in the sense
of \cite{A1}, see also section \ref{sec_mot}.

Our approach is
based on the deformation principle for motivated cohomology classes \cite[Theorem 0.5]{A1}.
More precisely, assume that
\begin{equation}\label{eqn_pi}
\pi\colon \XX\to B
\end{equation}
is a smooth projective morphism, $B$ a connected quasi-projective variety
and the fibres of $\pi$ are hyperk\"ahler manifolds. Assume that for some
point $b_0\in B$ the Andr\'e motive of $\XX_{b_0} = \pi^{-1}(b_0)$ is abelian.
This implies that the Kuga-Satake embedding for $\XX_{b_0}$ is motivated,
and the deformation principle \cite[Theorem 0.5]{A1} implies that
it is motivated for any fibre $\XX_{b_1}$, $b_1 \in B$. Therefore, it 
suffices to prove that there is one hyperk\"ahler manifold in each
deformation class whose motive is abelian. This is usually possible
to do using some explicit geometric construction. For example, in
the case of K3 surfaces one can assume that $\XX_{b_0}$ is a Kummer surface.
Other deformation types of hyperk\"ahler manifolds are discussed in section \ref{sec_main}.

The approach outlined above has one subtlety. Namely, assume that $X_1$
and $X_2$ are deformation equivalent projective hyperk\"ahler manifolds.
In the moduli space of all hyperk\"ahler manifolds the projective ones
are parametrized by a countable collection of divisors. We show in
section \ref{sec_mot_hk} that it is possible to realize $X_1$ and $X_2$
as fibres of a smooth analytic family as in (\ref{eqn_pi}), but it is a priori
not clear if one can make the family algebraic.
To resolve this issue, we prove in section \ref{sec_def}
a generalization of the deformation principle, which applies to the case
when all fibres of $\pi$ in (\ref{eqn_pi}) are projective, but the morphism
$\pi$ is projective only over a dense Zariski-open
subset of the base.

Before stating our main result we recall that two compact hyperk\"ahler
manifolds $X_1$ and $X_2$ are called deformation equivalent if they can
be realized as two fibres of a smooth family (\ref{eqn_pi}) where $B$ is
a connected complex analytic space and all fibres of $\pi$ are compact
hyperk\"ahler manifolds. So $X_1$ and $X_2$ are deformations of each other
in the complex analytic sense, and in general no polarization is preserved
along the deformation. Our main result is the following statement.

\begin{thm}\label{thm_main}
Let $X_1$ and $X_2$ be deformation equivalent projective hyperk\"ahler manifolds.
The Andr\'e motive of $X_1$ is abelian if and only if the Andr\'e motive of
$X_2$ is abelian.
\end{thm}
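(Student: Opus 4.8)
The plan is to reduce Theorem~\ref{thm_main} to a single application of the generalized deformation principle promised for section~\ref{sec_def}, once we have connected $X_1$ and $X_2$ by an appropriate family. First I would recall the generalized Kuga--Satake construction of \cite{KSV}: for a projective hyperk\"ahler manifold $X$ it produces an abelian variety $A$ together with an embedding $H^{\sdot}(X,\bbQ)\hookrightarrow H^{\sdot}(A,\bbQ)$ of Hodge structures, arising from a Hodge class on a product involving $X$ and $A$. The key observation is that the motive of $X$ is abelian precisely when this Kuga--Satake class is \emph{motivated} in the sense of \cite{A1}: if it is motivated, then $\bfH(X)$ becomes a sub-object of an abelian motive in $\catMmot$, hence abelian by semisimplicity of the Tannakian category of abelian motives; conversely if $\bfH(X)$ is already abelian there is nothing to prove. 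So the statement to transport along a deformation is ``the Kuga--Satake class is motivated.''

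Next I would construct the connecting family. Working in the moduli space (or rather in a suitable period domain / Teichm\"uller-type space) of the common deformation type, one can find a connected quasi-projective base $B$ and a smooth family $\pi\colon\XX\to B$ whose fibres are hyperk\"ahler of the given type, with $\XX_{b_1}\cong X_1$ and $\XX_{b_2}\cong X_2$ for two points $b_1,b_2\in B$; this is exactly the construction referred to in section~\ref{sec_mot_hk}. The subtlety, as flagged in the introduction, is that projectivity of the individual fibres $X_1,X_2$ (and of the very general fibre, which lies on one of the countably many Noether--Lefschetz-type divisors carrying a polarization) does not give projectivity of $\pi$; one only gets that $\pi$ is projective over some dense Zariski-open $B^{\circ}\subset B$ — indeed over the locus where a relative polarization class extends. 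We may and do arrange that $b_1,b_2\in B^{\circ}$, or more precisely that the Kuga--Satake family is available over $B^{\circ}$.

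Now I would invoke the generalized deformation principle of section~\ref{sec_def}: given $\pi\colon\XX\to B$ smooth projective over a dense open $B^{\circ}\subset B$ with all fibres projective hyperk\"ahler, a class in the cohomology of a fibre that is motivated at one point of $B$ spreads out to a global section of the local system of motivated classes and stays motivated at every point of $B$. Assuming the Andr\'e motive of $X_1$ is abelian, the Kuga--Satake class on (a power of) $\XX_{b_1}$ (together with the Kuga--Satake abelian scheme, which likewise deforms in a family over $B^{\circ}$) is motivated; the generalized deformation principle then forces it to be motivated on $\XX_{b_2}=X_2$, so the motive of $X_2$ is abelian. Interchanging $X_1$ and $X_2$ gives the equivalence.

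The main obstacle, which is precisely why the generalized deformation principle is needed, is that Andr\'e's original \cite[Theorem~0.5]{A1} requires $\pi$ to be globally projective, whereas here it is not. Overcoming this is the content of section~\ref{sec_def} and not of this proof; granting it, the only remaining technical points are (i) checking that the Kuga--Satake abelian variety itself propagates in a family over the base (so that ``motivated'' makes sense uniformly, using the fact that abelian schemes and the associated correspondences behave well in families), and (ii) verifying that one can choose the connecting base $B$ and the points $b_1,b_2$ so that both lie in the locus where the relevant families (the hyperk\"ahler family, its polarization, and the Kuga--Satake abelian scheme) are defined and projective. Both of these are routine given the setup in sections~\ref{sec_mot_hk} and \ref{sec_def}.
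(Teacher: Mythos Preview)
Your overall strategy matches the paper's: propagate ``the Kuga--Satake class is motivated'' along a family using the generalized deformation principle (Proposition~\ref{prop_def}), with the Kuga--Satake abelian scheme supplied over a finite cover by Corollary~\ref{cor_cover}. Where the proposal has a genuine gap is in treating the construction of the connecting family as routine. You assert that $X_1$ and $X_2$ can be joined by a \emph{single} smooth family $\pi\colon\XX\to B$ over a connected quasi-projective base with all fibres projective and $\pi$ projective over a dense open; this is precisely what is not obvious, and section~\ref{sec_mot_hk} does not furnish it in one step. Proposition~\ref{prop_curve} only produces such a family when $(X_1,\varphi_1)$ and $(X_2,\varphi_2)$ already lie in the \emph{same} polarized locus $\MM_h$ (with $h$ very ample on $X_1$), and the family is projective only away from the single point carrying $X_2$. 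In general $X_1\in\MM_{h_1}$ and $X_2\in\MM_{h_2}$ for distinct $h_1,h_2$, and one must chain together several such families through intermediate marked manifolds lying in intersections $\MM_{h_i}\cap\MM_{h_j}$; this is the case analysis (Cases~1--4) in section~\ref{sec_thm}, which in turn requires checking when $\DD_{h_i}\cap\DD_{h_j}\neq\emptyset$ via the signature of $q$ on $\langle h_i,h_j\rangle$. None of this is automatic.

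A second, smaller issue: your claim ``we may and do arrange that $b_1,b_2\in B^{\circ}$'' is the opposite of what actually happens. In each family produced by Proposition~\ref{prop_curve} the endpoint $x_2$ is exactly where projectivity of $\pi$ fails; if both endpoints could be placed in the projective locus, Andr\'e's original \cite[Theorem~0.5]{A1} would already apply and section~\ref{sec_def} would be unnecessary. By contrast the Kuga--Satake abelian scheme \emph{is} defined and projective over the entire base (after a finite cover), by Corollary~\ref{cor_cover}(3), since the relatively ample class over the open part supplies the required monodromy-invariant $h$ with $q(h)>0$; so that part of your point~(i) is fine. In short, the skeleton is correct but the step you label ``routine'' in~(ii) is in fact the substantive content of the proof.
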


The proof is given in section \ref{sec_thm}. 

We apply this theorem to several known deformation types of hyperk\"ahler manifolds.
We leave out only the OG10 type, which has recently been treated by Floccari, Fu and Zhang in \cite{FFZ}.

\begin{cor}\label{cor_hk}
Let $X$ be a projective hyperk\"ahler manifold of ${\rm K3}^{[n]}$, generalized Kummer, or OG6
deformation type. Then the Andr\'e motive of $X$ is abelian.
\end{cor}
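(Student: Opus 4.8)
The plan is to prove Corollary~\ref{cor_hk} by invoking Theorem~\ref{thm_main} together with explicit geometric models in each of the three deformation classes. By Theorem~\ref{thm_main}, the property of having an abelian Andr\'e motive is a deformation invariant among projective hyperk\"ahler manifolds, so it suffices to exhibit, for each of the ${\rm K3}^{[n]}$, generalized Kummer, and OG6 deformation types, a single projective representative whose motive is abelian. For the ${\rm K3}^{[n]}$ type the natural candidate is the Hilbert scheme $S^{[n]}$ of $n$ points on a projective K3 surface $S$ admitting an abelian motive—concretely a Kummer surface, or more generally any K3 surface for which the motive is known to be abelian, e.g. by the Kuga-Satake correspondence combined with Andr\'e's original results. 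For the generalized Kummer type, the representative is the generalized Kummer variety $K_n(A)$ attached to an abelian surface $A$, which sits inside a fibre of the summation morphism $A^{[n+1]}\to A$. For OG6, one uses O'Grady's explicit construction starting from an abelian surface, realizing the OG6 manifold as a symplectic resolution of a moduli space of sheaves on $A$ (or a quotient thereof).

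The key step is therefore to verify that each of these explicit models has abelian Andr\'e motive. First I would show that the Chow/Andr\'e motive of $S^{[n]}$ is generated by that of $S$: the Hilbert scheme of points on a surface has a motive expressible in terms of symmetric powers of the motive of $S$ (via the Beauville-type or G\"ottsche-Soergel decomposition, or de Cataldo-Migliorini), and since the motive of $S$ is abelian and abelian motives form a Tannakian subcategory stable under tensor products, symmetric powers, and Tate twists, the motive of $S^{[n]}$ is abelian. Second, for $K_n(A)$, the motive of $A^{[n+1]}$ is likewise generated by that of $A$ (hence abelian, as $A$ is an abelian variety), and $K_n(A)$ is cut out inside the fibre of the Albanese/summation map, whose motive is again a summand of a tensor construction on the motive of $A$; one concludes $K_n(A)$ has abelian motive. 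Third, for the OG6 example, one writes the moduli space of sheaves on $A$ and its symplectic resolution in terms of the motive of $A$—this is the most delicate case, and I would appeal to the motivic decompositions available for such moduli spaces (e.g. work of de Cataldo-Rapagnetta-Saccà or Floccari-Fu-Zhang computing the motives of OG6 manifolds), which express them as abelian motives.

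The main obstacle I anticipate is precisely the OG6 case: unlike Hilbert schemes of K3 surfaces and generalized Kummer varieties, where the reduction to the motive of the underlying surface is classical and clean, the O'Grady six-dimensional example requires controlling the motive through a singular moduli space and its resolution, and ensuring that the symplectic resolution does not introduce motivic summands outside the abelian category. One must also be careful that the ``explicit representative'' one chooses is genuinely projective and genuinely of the claimed deformation type—this is standard for each type but should be cited. Once the three models are in hand, the corollary follows formally: any projective hyperk\"ahler $X$ of one of these three types is deformation equivalent to the chosen model, and Theorem~\ref{thm_main} transfers the abelian property to $X$. I would close by remarking that OG10 is excluded because no comparably explicit model with a known abelian motive is available, which is why it is left to future work.
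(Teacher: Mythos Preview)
Your proposal is correct and follows the same overall strategy as the paper: invoke Theorem~\ref{thm_main} to reduce to exhibiting one projective representative with abelian Andr\'e motive in each deformation class, and for the ${\rm K3}^{[n]}$ and generalized Kummer types your argument (via the de~Cataldo--Migliorini decomposition of the Hilbert scheme motive and its Albanese-fibre analogue) matches the paper's exactly.

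The one substantive difference is the OG6 case. You propose working directly with O'Grady's symplectic resolution of a moduli space of sheaves on an abelian surface and controlling its motive through that construction. The paper instead uses the diagram of Mongardi--Rapagnetta--Sacc\`a \cite{MRS},
\[
X \stackrel{f}{\longleftarrow} Y_2 \stackrel{g_2}{\lrarr} Y_1 \stackrel{g_1}{\lrarr} Y,
\]
where $X$ is of OG6 type, $Y$ is of ${\rm K3}^{[3]}$ type, $g_1,g_2$ are blow-ups along centers with abelian motive, and $f$ is surjective and generically finite. This reduces OG6 to the already-established ${\rm K3}^{[3]}$ case via the two elementary stability properties of $\catMab$ (blow-ups and generically finite domination), so it is more self-contained than appealing to an independent motivic computation for the singular moduli space and its resolution. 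Your route can be made to work, but note that ``de~Cataldo--Rapagnetta--Sacc\`a'' is not an existing reference; you are likely conflating \cite{DM1},\cite{DM2} with \cite{MRS}.
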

\begin{proof}
By Theorem \ref{thm_main}, it suffices to find one manifold with
abelian motive in each deformation class.
For the Hilbert schemes of points on K3 surfaces and generalized Kummer varieties,
the motives are abelian by \cite{DM1}, \cite{DM2} and \cite{Xu}.
For OG6 deformation type, one can find a manifold with abelian motive
using the construction from \cite{MRS}.
We recall all these constructions in section \ref{sec_main}.
\end{proof}

The above Corollary recovers the results of \cite{A1}, \cite{A2} and \cite{Sc},
where the case of K3 surfaces and, more generally, ${\rm K3}^{[n]}$-type
varieties has been considered. 
Unlike \cite{Sc}, we do not
use the results of Markman on the structure of the cohomology ring,
which are specific for ${\rm K3}^{[n]}$-type varieties. The approach using
the Kuga-Satake construction is more general, and therefore allows us to treat
other deformation types. 

Let us also mention that a substantial amount of recent research has been
devoted to the study of Chow motives of hyperk\"ahler manifolds and related
questions, see e.g. \cite{FV} and references therein. Proving that
Chow motives of hyperk\"ahler manifolds are abelian seems to be a more difficult
problem, and the methods of the present paper are not sufficiently strong
to deal with it.

Let us next review
the applications of our results to the absolute Hodge classes and the Mumford-Tate
conjecture.

\subsection{Absolute Hodge classes}\label{sec_abs}
Let $X$ be a non-singular projective variety over $\bbC$.
Denote by $X^{an}$ the corresponding complex manifold. Recall that the de Rham cohomology
$H^\sdot_{dR}(X)$ is the hypercohomology of the algebraic de Rham complex $\Omega_{X/\bbC}^\sdot$.
For every $k$, the $\bbC$-vector space $H^k_{dR}(X)$ is endowed with a decreasing Hodge filtration $F^\sdot$.
On the other hand, the singular cohomology $H^k(X^{an},\bbC)$ is endowed with the $\bbQ$-structure
given by the subspace $H^k(X^{an},\bbQ)$. Comparison results between the algebraic and the analytic
cohomology of coherent sheaves and the quasi-isomorphism $\Omega^\sdot_{X^{an}}\simeq \bbC$ induce
natural isomorphisms of the cohomology groups $H^{k}_{dR}(X)\simeq H^k(X^{an},\bbC)$.

Recall that an element $\alpha\in F^pH^{2p}_{dR}(X)$ is called a Hodge class, if its image
in $H^{2p}(X^{an},\bbC)$ under the isomorphism described above is contained in the subspace
$(2\pi i)^pH^{2p}(X^{an},\bbQ)$. The Hodge conjecture implies that the property of $\alpha$
being Hodge should be stable under automorphisms of the field of complex numbers.
More precisely, let $\sigma\in \mathrm{Aut}(\bbC/\bbQ)$ be an automorphism and $X_\sigma$ be the
variety obtained by base change via $\sigma$. We have a chain of isomorphisms of $\bbQ$-vector spaces:
\begin{equation}\label{eqn_cohom}
H^{k}_{dR}(X) \simeq H^{k}_{dR}(X)\otimes_{\bbC,\sigma}\bbC\simeq H^{k}_{dR}(X_\sigma) \simeq H^{k}(X_{\sigma}^{an},\bbC).
\end{equation}

A cohomology class $\alpha\in F^pH_{dR}^{2p}(X)$ is called absolute Hodge, if its image
under the isomorphism (\ref{eqn_cohom}) lies in $(2\pi i)^p H^{2p}(X_{\sigma}^{an},\bbQ)$ for any
$\sigma\in \mathrm{Aut}(\bbC/\bbQ)$. If $\alpha$ is an algebraic class, i.e. it is
contained it the $\bbQ$-subspace spanned by the classed of algebraic subvarieties, then it is absolute
Hodge. According to the Hodge conjecture, every Hodge class should be algebraic, therefore absolute Hodge.

According to Deligne \cite{DMOS}, any Hodge class on an abelian variety is absolute.
Using the results of \cite{A1} and Theorem \ref{thm_main}, we deduce the following statement.

\begin{cor}\label{cor_abs}
Let $X$ be a projective hyperk\"ahler manifold of ${\rm K3}^{[n]}$, generalized Kummer, or OG6
deformation type. Then all Hodge classes on $X$ are absolute.
\end{cor}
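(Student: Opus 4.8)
The plan is to deduce this directly from Corollary \ref{cor_hk} together with the general principle, due to André, that a projective manifold with abelian André motive carries no non-absolute Hodge classes. By Corollary \ref{cor_hk}, for $X$ of $\mathrm{K3}^{[n]}$, generalized Kummer or OG6 type the motive $\frh(X)$ lies in the abelian subcategory of $\catMmot$, so the only thing left to unwind is the implication ``$\frh(X)$ abelian $\Rightarrow$ every Hodge class on $X$ is absolute'', which is \cite{A1}. Rather than merely cite it, I would recall its three ingredients, each of which is elementary once the framework of section \ref{sec_mot} is in place.

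First: by construction the André category is built from algebraic cycle classes by the ``motivated'' operations (pullback, pushforward, tensor product, direct sum, and the Lefschetz involutions), and each building block is absolute Hodge — cycle classes trivially, the Lefschetz operators by Hard Lefschetz applied in families — so every motivated class is absolute Hodge; hence it suffices to prove that every Hodge class $\alpha\in F^pH^{2p}_{dR}(X)$ whose Betti image lies in $(2\pi i)^pH^{2p}(X^{an},\bbQ)$, in \emph{every} degree $2p$, is motivated. Second: such an $\alpha$ is the datum of a morphism of polarizable $\bbQ$-Hodge structures $\mathbf{1}\to H^{2p}(X^{an},\bbQ)(p)$, and since $\catMmot$ is semisimple and its Betti realization is a faithful exact fibre functor, the motive $\frh^{2p}(X)(p)$ splits, compatibly with both structures, as a sum of simple motives, so $\alpha$ is motivated as soon as each simple summand contributing to it underlies a motivated class. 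Third: because $\frh(X)$ is abelian, each such simple summand is a subobject of $\frh(A)(m)$ for some abelian variety $A$ and integer $m$, and for abelian varieties Deligne's theorem \cite{DMOS}, refined by André's statement that Hodge classes on $A$ are motivated, provides the required class. Chaining the three steps yields the corollary.

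The step I expect to need the most care — though it is bookkeeping rather than a genuine obstacle — is the second one: the reduction must be performed inside the Tannakian category $\catMmot$ and not merely at the level of Hodge realizations, which is precisely what the semisimplicity of the André category (and the fact that a Hodge substructure of the realization of an abelian motive lifts to a sub-motive) is for. No input beyond Corollary \ref{cor_hk} and the formal properties of André motives recalled in section \ref{sec_mot} is required; in particular the statement holds for Hodge classes of arbitrary degree with no extra work, since $\frh(X)$ being abelian means that all of $H^\sdot(X)$ is cut out of motives of abelian varieties.
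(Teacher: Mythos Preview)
Your proposal is correct and follows the same route as the paper: invoke Corollary~\ref{cor_hk} to get that $\bfM(X)$ is abelian, then apply Andr\'e's result \cite[section 6]{A1} that Hodge classes on a variety with abelian Andr\'e motive are motivated, hence absolute. The paper's own proof is the two-line version of this, simply citing \cite[section 6]{A1}; your expansion of Andr\'e's argument into its three ingredients (motivated $\Rightarrow$ absolute, semisimple reduction, and Deligne's theorem refined to ``Hodge $\Rightarrow$ motivated'' on abelian varieties) is accurate, and your caveat about the second step correctly identifies where the real content lies --- that a Hodge substructure of the realization of an abelian motive lifts to a sub-motive is precisely Andr\'e's theorem, not mere bookkeeping.
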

\begin{proof}
By Corollary \ref{cor_hk}, the Andr\'e motive of $X$ is abelian, and
we can apply \cite[section 6]{A1}.
\end{proof}

\subsection{The Mumford-Tate conjecture}\label{sec_mt}
The purpose of this section is to explain how the results of Floccari \cite{Fl}
combined with Theorem \ref{thm_main} lead to the proof of some
cases of the Mumford-Tate conjecture for hyperk\"ahler manifolds.
A similar proof was independently found in \cite{FFZ}.

Assume that $X$ is a non-singular projective variety defined
over a subfield $\fldk\subset \bbC$ finitely generated
over $\bbQ$. Recall the comparison isomorphism between the $\ell$-adic and
singular cohomologies of $X$:
\begin{equation}\label{eqn_ladic}
H^k_{\acute{e}t}(X_{\bar{\fldk}},\bbQ_\ell)\simeq H^k(X^{an},\bbQ)\otimes \bbQ_\ell.
\end{equation}
Let us briefly denote by $H^k_\ell$ this $\bbQ_\ell$-vector space.

The left-hand side of (\ref{eqn_ladic}) is a representation
of the Galois group $\Gal(\bar{\fldk}/\fldk)$.
Denote by $G_\ell^k$ the Zariski closure of the image
of $\Gal(\bar{\fldk}/\fldk)$ in $\mathrm{GL}(H^k_\ell)$.
Let $G_\ell^{k,\circ}$ be the connected component of the identity in $G_\ell^k$.
The right-hand side of (\ref{eqn_ladic}) is naturally a representation
of the Mumford-Tate group $\mathrm{MT}^k(X)\otimes \bbQ_\ell$.
The Mumford-Tate conjecture predicts
that these two subgroups of $\mathrm{GL}(H^k_\ell)$ are equal:
\begin{equation}\label{eqn_mt}
G_\ell^{k,\circ} = \mathrm{MT}^k(X)\otimes \bbQ_\ell.
\end{equation}

This conjecture has been a subject of an active research. For an
overview of the recent developments, see \cite{M1}. There is a number
of recent works on the Mumford-Tate conjecture that use methods
similar to ours. In \cite{A2}, the Mumford-Tate conjecture in
degree 2 was proven for hyperk\"ahler manifolds. In \cite{M2} 
this result was generalized to a wider class of varieties with $h^{2,0} = 1$;
the proof relies on the Kuga-Satake construction. In \cite{C1}, it was
shown that for varieties with abelian Andr\'e motive the validity
of the Mumford-Tate conjecture does not depend on $\ell$. In \cite{Fl},
the Mumford-Tate conjecture in arbitrary degree for ${\rm K3}^{[n]}$-type
varieties was proven; the method of the proof relies on the results of
Markman about the structure of the cohomology algebra, similarly to \cite{Sc}.

Let us remark, that at present the Mumford-Tate conjecture is not
known even for general abelian varieties. On the other hand, we know from
\cite{A2} that it holds in degree 2 for any hyperk\"ahler manifold. Using
Theorem \ref{thm_main} and the results of \cite{Fl}, we can deduce
the Mumford-Tate conjecture in all even degrees for the same type
of varieties as in Corollary \ref{cor_hk}.

One special type of abelian varieties for which the Mumford-Tate conjecture
is known are the varieties of CM type. In this case, one can use the
results of \cite{Va}, see also \cite[Theorem 3.3.2 and Corollary 4.3.15]{M1}.
We deduce analogous results for hyperk\"ahler manifolds.
We will say that a projective hyperk\"ahler manifold $X$ is of CM type,
if the Mumford-Tate group of $H^2(X,\bbQ)$ is abelian. In this case the Mumford-Tate
groups of $H^k(X,\bbQ)$ are abelian for all $k$. This follows from the fact that the
Hodge structures on all cohomology groups of $X$ are induced by the natural Lie algebra
action, as we recall in section \ref{sec_ks}.

We summarize our discussion in the following statement. The idea of its proof is entirely
due to Floccari, and we follow his arguments from \cite{Fl}. The only
ingredient that was missing in \cite{Fl} is the deformation principle, Theorem \ref{thm_main}.
In the case of ${\rm K3}^{[n]}$-type it was obtained in \cite{Fl} independently.

\begin{cor}\label{cor_MT}
Let $X$ be a projective hyperk\"ahler manifold of ${\rm K3}^{[n]}$, generalized Kummer, or OG6
deformation type.
Then the Mumford-Tate conjecture holds for the cohomology of $X$ in all
even degrees. If $X$ is moreover of CM type, then the Mumford-Tate
conjecture holds for the cohomology in all degrees.
\end{cor}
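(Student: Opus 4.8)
The plan is to bootstrap from the Mumford--Tate conjecture in degree $2$, which holds for every projective hyperk\"ahler manifold by \cite{A2}, using the fact that the Andr\'e motive of $X$ is abelian (Corollary \ref{cor_hk}) to transport it to the remaining cohomology groups. For the even degrees the crucial point is that each $H^{2k}(X,\bbQ)$ already lies in the Tannakian subcategory of Andr\'e motives generated by $H^2(X,\bbQ)$. Write $\widetilde{H} = H^2(X,\bbQ)\oplus U$ for the Mukai extension, with $U$ a sum of copies of $\bbQ(-1)$; by the Looijenga--Lunts--Verbitsky theory recalled in section \ref{sec_ks}, the Lie algebra $\frso(\widetilde H)$ acts on $\bigoplus_k H^{2k}(X,\bbQ)$, the Hodge structure on every $H^{2k}(X,\bbQ)$ is induced from that of $\widetilde H$ through this action, and the decomposition of $\bigoplus_k H^{2k}(X,\bbQ)$ into $\frso(\widetilde H)$-isotypic pieces is effected by motivated projectors, each piece being cut out of a tensor power $\widetilde H^{\otimes m}$ by a motivated cycle. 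For ${\rm K3}^{[n]}$-type this is what \cite{Fl} establishes, via Markman's description of the cohomology algebra; for the generalized Kummer and OG6 types it follows from the fact --- which rests on Theorem \ref{thm_main} --- that the generalized Kuga--Satake embedding for $X$ is motivated, as this realizes $\widetilde H$, hence all of $\bigoplus_k H^{2k}(X,\bbQ)$, inside the motive of an abelian variety. Granting this, $\mathrm{MT}^{2k}(X)$ is the image of $\mathrm{MT}(\widetilde H)$ under the corresponding representation, and $\mathrm{MT}(\widetilde H)$ coincides with $\mathrm{MT}^2(X)$ up to the harmless action on the Tate summands $U$; since the $\ell$-adic realization of an Andr\'e motive is a tensor functor, the Galois representation on $H^{2k}_\ell$ is obtained from the one on $\widetilde H_\ell$ through the same representation, so that $G^{2k}_\ell$ is its image. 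Pushing the equality $G^{2,\circ}_\ell = \mathrm{MT}^2(X)\otimes\bbQ_\ell$ of \cite{A2} through this representation --- and using that a homomorphism of algebraic groups carries the identity component onto the identity component of the image --- yields $G^{2k,\circ}_\ell = \mathrm{MT}^{2k}(X)\otimes\bbQ_\ell$, since the K\"unneth projector onto $H^{2k}(X)$ is algebraic.

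Suppose moreover that $X$ is of CM type, i.e. $\mathrm{MT}^2(X)$ is abelian. Then, by the structural facts of section \ref{sec_ks} --- the Hodge structure on each cohomology group of $X$ is induced by the natural Lie algebra action, built from $\widetilde H$ and, in the remaining (odd) degrees, from the $H^1$ of the abelian varieties occurring in the construction --- all the groups $\mathrm{MT}^k(X)$ are tori. Since the Andr\'e motive of $X$ is abelian and, by the same structural facts together with the motivated Kuga--Satake construction, each $H^k(X,\bbQ)$ is cut, up to Tate twists and finite base change, out of a tensor power of the $H^1$ of one of those abelian varieties --- which is now of CM type --- the Mumford--Tate conjecture for $H^k(X)$ follows from its validity for abelian varieties of CM type \cite{Va} (see also \cite[Corollary 4.3.15]{M1}), giving $G^{k,\circ}_\ell = \mathrm{MT}^k(X)\otimes\bbQ_\ell$ in all degrees.

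The step I expect to be the main obstacle is the assertion, used in both cases, that $H^{2k}(X,\bbQ)$ belongs to the subcategory generated by $H^2(X,\bbQ)$ \emph{as an Andr\'e motive}, not merely as a Hodge structure: the algebra $\frso(\widetilde H)$ is not generated by algebraic classes on $X$ alone --- already when the Picard rank of $X$ is small --- so the motivated character of its action, and of the attendant isotypic projectors, must be extracted either by a deformation argument, or, away from ${\rm K3}^{[n]}$-type where \cite{Fl} and Markman's results apply, from the motivated character of the generalized Kuga--Satake construction, which is precisely the point at which Theorem \ref{thm_main} is used. Once that is in hand, the passage from the Hodge-theoretic statements to the $\ell$-adic ones is formal, being built into the formalism of Andr\'e motives.
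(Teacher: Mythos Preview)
Your overall strategy matches the paper's: abelian motive (Corollary \ref{cor_hk}) plus the Mumford--Tate conjecture for $H^2$ \cite{A2} plus the fact that $\bfH^{2k}(X)$ lies in the Tannakian subcategory generated by $\bfH^2(X)$. The paper's proof is a two-line citation --- \cite[Theorem 5.1]{Fl} together with \cite[(3.3), Theorem 3.3.2, Corollary 4.3.15]{M1} --- and you are essentially reconstructing what those references provide.

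There is, however, a genuine slip in your justification of the key step. For the non-$\mathrm{K3}^{[n]}$ types you argue that ``each piece is cut out of $\widetilde H^{\otimes m}$ by a motivated cycle'' because the generalized Kuga--Satake embedding is motivated. But that embedding places $\bfH^{2k}(X)$ inside $\langle \bfH^1(A)\rangle$ for the Kuga--Satake variety $A$, and this Tannakian subcategory is in general strictly larger than $\langle \bfH^2(X)\rangle$: the surjection $\mathrm{MT}(H^1(A))\to\mathrm{MT}(H^2(X))$ has nontrivial finite kernel, and the Mumford--Tate conjecture for $A$ itself is not known. So you cannot push the degree-$2$ equality through that route. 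The correct argument is the one your own final paragraph already hints at but does not quite state: the LLV decomposition furnishes an embedding of Hodge structures $H^{2k}(X)\hookrightarrow\bigoplus\widetilde H^{\otimes m}$ (only tensor representations of $\frso(\widetilde V)$ occur in even degree), and this embedding is a Hodge class on a product of copies of $X$; once $\bfM(X)$ is abelian every such Hodge class is motivated, so the embedding lifts to $\catMmot$ directly. This is precisely what \cite[Theorem 5.1]{Fl} packages, and it applies uniformly to all projective hyperk\"ahler manifolds --- your case split between $\mathrm{K3}^{[n]}$ and the other types is unnecessary. With this correction the rest of your argument, including the CM case, goes through and agrees with the paper's.
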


\begin{proof}
Let $H^+_\ell = \oplus_{k}H^{2k}_\ell(X^{an},\bbQ_\ell)$ and $H^2_\ell = H^{2}(X^{an},\bbQ_\ell)$.
Denote by $G^{+,\circ}_\ell \subset \mathrm{GL}(H^+_\ell)$ and $G^{2,\circ}_\ell \subset \mathrm{GL}(H^2_\ell)$
the connected algebraic groups obtained from the Galois representations as
explained above. Let $\mathrm{MT}^+ \subset \mathrm{GL}(H^+_\ell)$ and $\mathrm{MT}^2 \subset \mathrm{GL}(H^2_\ell)$
be the Mumford-Tate groups. Since the Andr\'e motive of $X$ is abelian, by
\cite[formula (3.3)]{M1} we have an inclusion $G^{+,\circ}_\ell\subset \mathrm{MT}^+$.
By the work of Andr\'e \cite{A2}, the analogous inclusion in degree two
is an isomorphism $G^{2,\circ}_\ell\simeq \mathrm{MT}^2$. We have the following
diagram of groups
\begin{equation}
\begin{tikzcd}[]
G^{+,\circ}_\ell \arrow[hook]{r}\arrow[two heads]{d} & \mathrm{MT}^+ \arrow{d}{\simeq} \\
G^{2,\circ}_\ell \arrow[hook]{r}{\simeq} & \mathrm{MT}^2
\end{tikzcd}\nonumber
\end{equation}
where the vertical arrows are induced by the projection $H^+_\ell \to H^2_\ell$.
The arrow on the right is an isomorphism because the Hodge structures on
the cohomology of $X$ are induced by the action of the orthogonal Lie algebra $\frgt$
(see section \ref{sec_ks} and \cite[(1.7)]{LL}).

It follows from the diagram above that the upper arrow is also surjective, which proves the first
part of the corollary. The second part follows
from \cite[Theorem 3.3.2]{M1}, see also \cite[Corollary 4.3.15]{M1}.
\end{proof}

\subsection{Organization of the paper} In section \ref{sec_mot}, we discuss
the notions of a motivated cohomology class and Andr\'e motive. We recall
all necessary definitions and constructions from \cite{A1}. In section \ref{sec_ks},
we recall basic results about the cohomology of hyperk\"ahler manifolds
and the Kuga-Satake construction from \cite{KSV}. The main results of this
section are Proposition \ref{prop_mc} and Corollary \ref{cor_cover}, which
generalize the results of \cite{KSV} to the relative setting.
In section \ref{sec_main}, we recall some known results about
the Hilbert schemes of points, generalized Kummer varieties and O'Grady's
6-dimensional varieties. We explain that in each of these deformation types
one can find a variety with abelian motive.
In section \ref{sec_def}, we prove a generalization of the
deformation principle for motivated cohomology classes, Proposition \ref{prop_def}.
In section \ref{sec_mot_hk}, we discuss the construction of families
of hyperk\"ahler manifolds and in \ref{sec_thm} prove the main result, Theorem \ref{thm_main}.

\section{Motivated cohomology classes and Andr\'e motives}\label{sec_mot}

In this section, we briefly recall some results of \cite{A1}. Let $\catVarC$ be the category of
non-singular complex projective varieties and their morphisms. For a variety $X\in\catVarC$,
we will denote by $H^k(X)$ the singular cohomology group $H^k(X^{an},\bbC)$.
Let $\catAlg{\bbQ}$ be the the category of finite-dimensional graded $\bbQ$-algebras.

\subsection{Motivated cohomology classes}
Assume that $X\in\catVarC$ and let $\LL$ be an ample line bundle on $X$.
Denote by $h\in H^{2}(X)$ the first Chern class of $\LL$.
The Lefschetz operator $L_h\in \emrp(H^\sdot(X))$ is defined as the cup product with $h$,
and it induces isomorphisms $L_h^k\colon H^{n-k}(X)\stackrel{\sim}{\to} H^{n+k}(X)$
for every $k=0,\ldots,n$, where $n=\dim_\bbC(X)$. The subspace of primitive elements $H^{k}_{pr}(X)\subset H^{k}(X)$,
$k = 0,\ldots,n$, 
is by definition the kernel of $L_h^{n-k+1}$.

We will denote by $*_h\in \emrp(H^\sdot(X))$ the Lefschetz involution associated with $h$.
Recall that for $x\in H^{k}_{pr}(X)$ and $i=0,\ldots,n-k$ we have $*_h(L_h^ix) = L_h^{n-k-i}x$.
This uniquely determines $*_h$, since $H^{\sdot}(X)$ is spanned by the elements of the form $L_h^ix$
with $x$ primitive.

For $X, Y\in \catVarC$ and two ample line bundles $\LL_1\in \mathrm{Pic}(X)$,
$\LL_2\in\mathrm{Pic}(Y)$, let $p_X$, $p_Y$ denote the two projections from $X\times Y$,
and let $h = c_1(p_X^*\LL_1\otimes p_Y^*\LL_2)\in H^2(X\times Y)$. For any two
classes of algebraic cycles $\alpha,\beta\in H^\sdot_{alg}(X\times Y)$, consider the class
\begin{equation}\label{eqn_mot}
p_{X*}(\alpha\cup *_h\beta).
\end{equation}

Let $H^\sdot_{M}(X)$ be the $\bbQ$-subspace of $H^\sdot(X)$ spanned by the classes (\ref{eqn_mot})
for all $Y$, $\LL_1$, $\LL_2$, $\alpha$, $\beta$ as above. Elements of $H^\sdot_M(X)$ will
be called motivated cohomology classes (or motivated cycles, in the terminology of \cite{A1}).
Let us list a few properties of the motivated classes.
\begin{enumerate}
\item $H^\sdot_{M}(X)$ is a graded $\bbQ$-subalgebra of $H^\sdot(X)$;
\item For $f\colon X\to Y$, we have $f^*H^\sdot_{M}(Y)\subset H^\sdot_{M}(X)$,
hence we have a  functor $$H^\sdot_{M}\colon \catVarC^{op}\to\catAlg{\bbQ};$$
\item For $f$ as above, we have $f_*H^\sdot_{M}(X)\subset H^{\sdot-\dim(X)}_{M}(Y)$;
\item All classes in $H^\sdot_{M}(X)$ are absolute Hodge;
\item The K\"unneth components of the diagonal are contained in $H^{\sdot}_M(X\times X)$ for any $X\in \catVarC$.
\end{enumerate}

Let us also recall the following deformation principle:
\begin{thm}[{\cite[Theorem 0.5]{A1}}]\label{def_princ}
Let $\pi\colon\XX\to B$ be a smooth projective morphism.
Assume that the base $B$ is a connected quasi-projective variety.
Let $\nu \in \Gamma(B,R^{2p}\pi_*\bbC)$.
Assume that for some $b_0\in B$ the element $\nu_{b_0}\in H^{2p}(\XX_{b_0})$ is a motivated
cohomology class. Then $\nu_{b_1}$ is a motivated class on $\XX_{b_1}$ for any $b_1\in B$.
\end{thm}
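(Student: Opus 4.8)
The plan is to imitate the proof of Deligne's ``Principle B'' for absolute Hodge classes: reduce, via Deligne's global invariant cycle theorem, to a statement about a single smooth projective variety compactifying the total space of $\pi$, and then propagate motivatedness from $b_0$ to $b_1$ using that a flat section of a local system on a connected base is pinned down by its value at one point.

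\textbf{Reductions.} First I would note that any two points of the connected variety $B$ are joined by a chain of curves, and that restricting $\pi$ and $\nu$ to (the normalization of) such a curve preserves all the hypotheses; so it suffices to treat the case where $B$ is a smooth affine curve $C$. Let $\bar C\supset C$ be the smooth projective completion. Embedding $\XX$ over $C$ in a projective space, taking the Zariski closure over $\bar C$, and resolving singularities (which changes nothing over $C$, where $\XX$ is already smooth), I obtain a smooth projective variety $\bar\XX$ with a proper morphism $\bar f\colon\bar\XX\to\bar C$ that is smooth over $C$, agrees with $\pi$ there, and contains $\XX$ as a dense open subset. For $b\in C$ write $\iota_b\colon\XX_b\hookrightarrow\bar\XX$ for the closed inclusion of the fibre, a morphism of smooth projective varieties.

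\textbf{Propagation.} Next I would use two facts: by Deligne's global invariant cycle theorem the restriction $\iota_b^*\colon H^{2p}(\bar\XX)\to H^{2p}(\XX_b)$ has image exactly the monodromy invariants $H^{2p}(\XX_b)^{\pi_1(C,b)}$, which contain $\nu_b$; and for a fixed $\tilde\nu\in H^{2p}(\bar\XX)$ the assignment $b\mapsto\iota_b^*\tilde\nu$ is a flat section of $R^{2p}\pi_*\bbC$ over $C$. Since two flat sections agreeing at $b_0$ agree everywhere, any $\tilde\nu\in H^{2p}(\bar\XX)$ with $\iota_{b_0}^*\tilde\nu=\nu_{b_0}$ automatically satisfies $\iota_{b_1}^*\tilde\nu=\nu_{b_1}$. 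As $\iota_{b_1}$ is a morphism of smooth projective varieties, property (2) of motivated classes gives $\iota_{b_1}^*(H^{2p}_M(\bar\XX))\subseteq H^{2p}_M(\XX_{b_1})$. So the whole problem reduces to producing a \emph{motivated} lift: a class $\tilde\nu\in H^{2p}_M(\bar\XX)$ with $\iota_{b_0}^*\tilde\nu=\nu_{b_0}$.

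\textbf{The motivated lift, and the main obstacle.} An arbitrary lift of $\nu_{b_0}$ (which exists by the previous step) carries no reason to be motivated, so its motivatedness must be imported from the hypothesis on $\nu_{b_0}$. I would construct a motivated correspondence $\gamma\in H^\sdot_M(\XX_{b_0}\times\bar\XX)$ whose action $\gamma_*\colon H^{2p}(\XX_{b_0})\to H^{2p}(\bar\XX)$ is a one-sided inverse of $\iota_{b_0}^*$ on the monodromy invariants; then $\tilde\nu:=\gamma_*(\nu_{b_0})$ is motivated, being the image of a motivated class under a motivated correspondence (properties (1)--(3)), and satisfies $\iota_{b_0}^*\tilde\nu=\nu_{b_0}$. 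To build $\gamma$ one uses that the Leray (perverse) filtration of $H^\sdot(\bar\XX)$ with respect to $\bar f$ splits \emph{motivically}: choosing a class $\lambda\in H^2(\bar\XX)$ ample relative to $\bar f$ and $\mu=\bar f^*h$ with $h$ ample on $\bar C$, the relative hard Lefschetz theorem and Deligne's description of the decomposition theorem should express the projector onto the lowest graded piece $H^0(\bar C,R^\sdot\bar f_*\bbC)$, and the splitting of $H^{2p}(\XX_{b_0})^{\pi_1}$ inside $H^{2p}(\XX_{b_0})$ realized through $\iota_{b_0}$, as universal polynomials in the Lefschetz operators $L_\lambda,L_\mu$ and the associated $*$-operators --- exactly the operations built into the definition of $H^\sdot_M$ --- and assembling these with the correspondences $\iota_{b_0}$ and $\iota_{b_0\,*}$ yields $\gamma$. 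The hard part is precisely this last step: turning the decomposition-theorem splitting for $\bar f$, which is smooth only over $C$ and may have bad fibres over $\bar C\setminus C$, into honest polynomial combinations of algebraic Lefschetz operators and $*$-operators \emph{on all of $\bar\XX$} (passing to a base curve helps, keeping the perverse filtration to three steps and forcing $E_2$-degeneration), and then checking that the resulting correspondence is motivated in the strict sense of \cite{A1}, not merely absolute Hodge. Once that is in place the reductions and the propagation step are formal, and the hypothesis at $b_0$ enters only through the single identity $\tilde\nu=\gamma_*(\nu_{b_0})$.
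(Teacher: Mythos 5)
The paper itself does not reprove Andr\'e's Theorem 0.5 --- it cites it --- and instead proves a generalization in section 5; I will compare you against both. Your reductions (pass to a smooth curve, compactify the total space to a smooth projective $\bar{\XX}$, invoke the global invariant cycle theorem, propagate a lift $\tilde\nu$ via $\iota_{b_1}^*$) are exactly the right framework and match the paper's. The gap is precisely where you locate it, and it is real; but the fix is far cheaper than the relative-Lefschetz/decomposition-theorem route you contemplate, which is the route one would be forced into for \emph{Chow} motives where semisimplicity is unknown. For Andr\'e motives, Andr\'e proved that $\catMmot$ is a semisimple neutral Tannakian category (as recalled in section 2 of the paper), and this hands you $\gamma$ for free. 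Concretely: $N := \im\bigl(\iota_{b_0}^*\colon \bfH^{2p}(\bar{\XX})\to\bfH^{2p}(\XX_{b_0})\bigr)$ is a subobject with Hodge realization the monodromy invariants $H^{2p}(\XX_{b_0})^{\pi_1}$; by semisimplicity the surjection onto $N$ admits a splitting $s$, and $N$ is a direct summand of $\bfH^{2p}(\XX_{b_0})$ with a projector $\mathrm{pr}$; then $\gamma := s\circ\mathrm{pr}$ is a morphism of Andr\'e motives, hence a motivated correspondence, with $\iota_{b_0}^*\circ\gamma=\mathrm{id}$ on the invariants. Faithfulness of the Hodge realization shows that the motivated class $\nu_{b_0}$, lying in the realization of $N$, defines a morphism $\mathbf{1}\to N(p)$, so $\tilde\nu := s\circ\nu_{b_0}$ is the motivated lift you need. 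No Lefschetz operators, perverse filtrations, or universal polynomials are required.

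For comparison, the paper's proof of the generalization is organized slightly differently: rather than choosing a splitting and producing a lift, it shows that the two restriction maps on $H^{2p}(\bar{\XX})$ to the fibres over $b_0$ and $b_1$ have the same kernel, so the two quotient motives (the ``invariant'' submotives of the fibre cohomologies) are \emph{canonically} isomorphic, with compatibility with parallel transport built in. That is cleaner --- no choices --- but morally the same as your lift-and-restrict argument once the section is known to exist. Either way the real input is the Tannakian semisimplicity of $\catMmot$, not any explicit decomposition of $H^\sdot(\bar{\XX})$ into Lefschetz pieces, and as you yourself observe you never actually carry out the explicit construction.
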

We will generalize the deformation principle in section \ref{sec_def}, relaxing the condition
of projectivity for the morphism $\pi$, see Proposition \ref{prop_def}.

\subsection{Andr\'e motives} The construction of Andr\'e motives is similar
to that of Chow motives. We start by defining the spaces of motivated correspondences:
for $X,Y\in \catVarC$ with $X$ connected, let $\mathrm{Cor}_M^k(X,Y) = H^{k+\dim(X)}_M(X\times Y)$.
The properties of motivated classes listed above imply that we can define the composition
of motivated correspondences in the usual way.

Define a $\bbQ$-linear category $\catMmot$ whose objects are triples $(X,p,n)$, where $X\in \catVarC$,
$p\in \mathrm{Cor}_M^0(X,X)$, $p\circ p = p$, and $n\in \bbZ$. The space of morphisms from $(X,p,n)$
to $(Y,q,m)$ is by definition $q\circ \mathrm{Cor}^{m-n}_M(X,Y)\circ p \subset \mathrm{Cor}^{m-n}_M(X,Y)$.
The tensor product on $\catMmot$ is defined using the Cartesian product of varieties.
It is shown in \cite{A1}, that $\catMmot$ is a semi-simple graded neutral Tannakian category.
We will denote the Andr\'e motives by boldface letters: for $X\in \catVarC$ define 
$$\bfM(X)(n) = (X,[\Delta_X],n)\in \catMmot,$$ where $\Delta_X$ denotes the diagonal in $X\times X$.
More generally, since the K\"unneth components of the diagonal are motivated, we can
define the motives representing the cohomology groups of $X$:
$$
\bfH^k(X)(n) = (X, \delta_k, n) \in \catMmot,
$$
where $\delta_k$ is the $k$-th K\"unneth component of $[\Delta_X]$. Therefore, we have
the direct sum decomposition $\bfM(X)(n) = \oplus_k \bfH^k(X)(n)$.

The subcategory of abelian motives $\catMab$ is the minimal full Tannakian subcategory of $\catMmot$
containing $\bfM(A)(n)$ for all abelian varieties $A$ and $n\in \bbZ$. It is shown in \cite{A1}
that for any $X\in\catVarC$, if $\bfM(X)\in \catMab$, then all Hodge classes on $X$ are motivated,
in particular absolute Hodge.

We will use the well-known fact that the class of varieties with abelian motives is stable under
two basic operations. Namely, let $X,Y\in \catVarC$ and let $f\colon X\to Y$ be a surjective
generically finite morphism. Then $f^*\colon \bfM(Y) \hrarr \bfM(X)$ is injective, and
$\bfM(X)\in \catMab$ implies $\bfM(Y)\in \catMab$. The second basic operation is the blow-up:
if $i\colon X\hrarr Y$ is a closed immersion and $\bfM(X), \bfM(Y)\in \catMab$, then
$\bfM(\mathrm{Bl}_XY)\in\catMab$. As an example, consider a complex abelian surface $A$
and denote by $A[2]$ its $2$-torsion points. Then $S = (\mathrm{Bl}_{A[2]}A)/\pm 1$ is
the corresponding Kummer K3 surface. We see that $\bfM(S) \in \catMab$.


\section{Hyperk\"ahler manifolds and the Kuga-Satake embedding}\label{sec_ks}

In this section, we recall basic properties of hyperk\"ahler manifolds.
We refer to \cite{So3}, \cite{Hu1}, \cite{Be} for more details.
We also recall the results of \cite{KSV} and extend them to the relative setting.

\subsection{Hyperk\"ahler manifolds}\label{sec_hyperk}
In this paper, a hyperk\"ahler manifold is a compact simply connected
K\"ahler manifold $X$ such that $H^0(X,\Omega^2_X)$
is spanned by a symplectic form. The dimension of any symplectic
manifold is even; we let $\dim_\bbC(X) = 2n$.

Let $V_\bbZ=H^2(X,\bbZ)$ and $V = V_\bbZ\otimes\bbQ$. Let $q\in S^2V^*$ denote
the Beauville-Bogomolov-Fujiki (BBF) form. Recall that this form has the following
property: there exists a constant $c_X\in \bbQ$ such that for all
$h\in H^2(X,\bbQ)$ the equality $q(h)^n = c_X h^{2n}$ holds.
We can assume that $q$ is integral and primitive on $V_\bbZ$, and $q(h) > 0$ for a K\"ahler
class $h$. The signature of $q$ is $(3,b_2(X)-3)$.
Recall also that $V$ carries a Hodge structure of K3 type.

There exists a natural action of the orthogonal Lie algebra on the
total cohomology of $X$. Let us briefly recall how to define this action.
An element $h\in V$ has Lefschetz property, if the cup product with $h^k$
induces an isomorphism $H^{2n-k}(X,\bbQ)\simeq H^{2n+k}(X,\bbQ)$ for all $k=0,\ldots,2n$.
In this case one can consider the corresponding Lefschetz $\frsl_2$-triple.
Let $\frgt\subset \emrp(H^{\sdot}(X,\bbQ))$ be the Lie subalgebra generated
by all such $\frsl_2$-triples.

Let $\tilde{V} = \langle e_0\rangle\oplus V\oplus \langle e_4\rangle$
be the graded $\bbQ$-vector space with $e_i$ of degree $i$, and $V$ in degree 2.
Let $\tilde{q}\in S^2\tilde{V}^*$ be the quadratic form such that:
$\tilde{q}|_V = q$, $e_0$ and $e_4$ are isotropic and orthogonal to $V$ and span a hyperbolic
plane. It was shown in \cite{Ve1} and \cite{LL} that there exists an isomorphism
of graded Lie algebras $\frgt\simeq\frso(\tilde{V},\tilde{q})$.

One can show that Hodge structures on the cohomology groups of $X$
are induced by the action of $\frgt$. More precisely, let $W$ be the Weil operator
that induces the Hodge structure on $V$, i.e. it acts on $V^{p,q}$ as multiplication by $\ii(p-q)$.
Then $ W\in \frso(V,q)\subset \frso(\tilde{V},\tilde{q})$ and $W$ induces
the Hodge structures on $H^k(X,\bbQ)$ for all $k$ (see \cite{Ve1}).

\subsection{The Kuga-Satake construction}\label{subsec_ks}
To the K3 type Hodge structure $V$ we can associate a Hodge structure of abelian type,
which is called the Kuga-Satake Hodge structure.
Let us briefly recall the construction. Let $H = \Cl(V,q)$ be the Clifford algebra.
There exists a natural embedding $V\hrarr H$.
Define $H^{0,-1}$ to be the right ideal $V^{2,0}\cdot H_\bbC$ (see \cite[Lemma 3.3]{SS}), and $H^{-1,0} =\overline{H^{0,-1}}$.
This defines a rational Hodge structure on $H$. 

Note that $H$ is canonically an
$\frso(V,q)$-module, and the Hodge structure on it is induced by the action of the
Weil operator $W$ (see e.g. \cite{SS}). Since the hyperk\"ahler manifold $X$
is projective, one can show that $H$ is polarizable.
Moreover, the polarization can be chosen $\Spin(V,q)$-invariant (see e.g. \cite{KSV} or \cite{VG}).

Let $d = \frac{1}{4} m \dim_\bbQ(H)$. The following theorem was proven in \cite{KSV}.

\begin{thm}\label{thm_ks}
There exists a structure of graded
$\frso(\tilde{V},\tilde{q})$-module on $\Lambda^\sdot H^*$
that extends the $\frso(V,q)$-module structure.
For some $m>0$ there exists an
embedding of $\frso(\tilde{V},\tilde{q})$-modules
$$
H^{\sdot+2n}(X,\bbQ)\hrarr \Lambda^{\sdot+2d}(H^{* \oplus m}).
$$
This induces embeddings of Hodge structures
$$
\nu_i\colon H^{i+2n}(X,\bbQ(n))\hrarr \Lambda^{i+2d}(H^{* \oplus m})(d),
$$
where $i = -2n,\ldots,2n$.
\end{thm}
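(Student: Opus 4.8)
The plan is to recast the statement as representation theory of the graded Lie algebra $\frgt\simeq\frso(\tilde V,\tilde q)$ and to feed it through the Clifford-algebra structure of $H=\Cl(V,q)$. Write $\tilde V=V\oplus U$ with $U=\langle e_0\rangle\oplus\langle e_4\rangle$ the hyperbolic plane, and use the $\bbZ$-grading $\frso(\tilde V)=V_{-2}\oplus(\frso(V)\oplus\bbQ E)\oplus V_{+2}$, in which $E$ is the grading operator, $V_{\pm2}$ are two copies of $V$ playing the roles of a Lefschetz operator and its adjoint, and the brackets are $[e^+_v,e^+_w]=[e^-_v,e^-_w]=0$, $[e^+_v,e^-_w]=q(v,w)\,E+\xi_{v,w}$ with $\xi_{v,w}\in\frso(V)$, and $[\xi,e^\pm_v]=e^\pm_{\xi v}$ for $\xi\in\frso(V)$. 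Two things must then be produced: (a) an action of $\frso(\tilde V)$ on $\Lambda^\sdot H^*$, compatible with the exterior grading and extending the natural $\frso(V)$-action (the one in which $\frso(V)=\Lambda^2 V\subset\Cl(V)$ acts on $H=\Cl(V)$ by left multiplication, extended to $H^*$ and to $\Lambda^\sdot H^*$ by derivations); (b) for suitable $m>0$, an $\frso(\tilde V)$-equivariant embedding $H^{\sdot}(X,\bbQ)\hookrightarrow\Lambda^\sdot(H^{*\oplus m})=(\Lambda^\sdot H^*)^{\otimes m}$. The degree shifts and Tate twists in the statement of the $\nu_i$ are then just bookkeeping of $E$ and the weights.

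For (a) I would realise $\frso(\tilde V)$ inside $\frso(H\oplus H^*)$, the orthogonal Lie algebra of the canonical hyperbolic pairing on $H\oplus H^*$, whose spinor module is precisely $\Lambda^\sdot H^*$ (with $H$ acting by contraction and $H^*$ by exterior multiplication). Using $\frso(H\oplus H^*)=\Lambda^2 H\oplus\frgl(H)\oplus\Lambda^2 H^*$, send $\frso(V)$ to its image in $\frgl(H)$ under left multiplication on $H=\Cl(V)$ (this is the action being extended), send $E$ to the normalised exterior-degree operator, and send $e^+_v$, resp. $e^-_v$, to the $2$-form $\omega_v\in\Lambda^2 H^*$, resp. the $2$-vector $\omega_v^\vee\in\Lambda^2 H$, built from left Clifford multiplication by $v$ and the $\Spin(V)$-invariant bilinear form on $\Cl(V)$ recalled above, with the parity signs making $\omega_v$ genuinely alternating dictated by the Clifford grading. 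The relations $[e^+_v,e^+_w]=[e^-_v,e^-_w]=0$ then hold because $\Lambda^2 H^*$ and $\Lambda^2 H$ are abelian subalgebras of $\frso(H\oplus H^*)$, and $[\xi,e^\pm_v]=e^\pm_{\xi v}$ is the $\frso(V)$-equivariance of $v\mapsto\omega_v$. The one substantive identity is $[e^+_v,e^-_w]=q(v,w)\,E+\xi_{v,w}$: computing the composite $H\to H^*\to H$ attached to $\omega_v$ and $\omega_w^\vee$ produces left multiplication by $wv$, and $wv=q(v,w)+\tfrac12[w,v]$ with $\tfrac12[w,v]\in\Lambda^2 V=\frso(V)$, which is exactly the required relation --- this is the point where $vw+wv=2q(v,w)$ does the work. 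Thus $\Lambda^\sdot H^*$ acquires a graded $\frso(\tilde V)$-module structure extending the given $\frso(V)$-one.

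For (b) I would argue purely representation-theoretically. Since $\frso(\tilde V)$ is reductive, finite-dimensional modules are semisimple, so it is enough to embed each of the finitely many isotypic components of $H^\sdot(X,\bbQ)$ and then take $m$ to be the maximum of the finitely many exponents that occur. Both modules are representations of the algebraic group $\SO(\tilde V)$: for $\Lambda^\sdot H^*=\Lambda^\sdot(\Cl(V)^*)$ this is seen on restriction to $\frso(V)$, since $\Cl(V)\simeq S_V\otimes S_V^*$ is of tensor type as an $\frso(V)$-module; for $H^\sdot(X,\bbQ)$ it is part of the structure theory of the $\frgt$-action, the action integrating to $\SO(\tilde V)$ (note $\tilde V=H^0\oplus H^2\oplus H^4$ is already a constituent). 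One checks that $\Lambda^\sdot H^*$ contains a faithful $\SO(\tilde V)$-subrepresentation --- e.g. by examining its action on the lowest exterior degrees together with the central element of $\SO(\tilde V)$ --- and hence every representation of $\SO(\tilde V)$, in particular each isotypic component of $H^\sdot(X,\bbQ)$, is a direct summand of some tensor power $(\Lambda^\sdot H^*)^{\otimes k}$; this yields the embedding for $m$ large.

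It remains to see the embeddings respect Hodge structures, and to fix the twists. By construction $\Lambda^\sdot(H^{*\oplus m})$ carries the tensor-product Hodge structure induced from the Kuga-Satake Hodge structure on $H$, which is induced by the Weil operator $W\in\frso(V)$ acting via the spin representation; the same $W$, acting through $\frso(V)\subset\frso(\tilde V)$, induces the Hodge structures on all $H^k(X,\bbQ)$ by Verbitsky's theorem recalled above. Being $\frso(\tilde V)$-equivariant, the embedding of (b) commutes with $W$, hence is a morphism of Hodge structures; matching the gradings through $E$ and twisting both sides to weight $i$ --- by $(n)$ on the left and by $(d)$ on the right, where $d=\tfrac14 m\dim_\bbQ H$ so that $\Lambda^{2d}$ is the middle exterior degree of $\Lambda^\sdot(H^{*\oplus m})$ --- gives the embeddings $\nu_i$. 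The main obstacle, I expect, is the bookkeeping in (a) --- fixing signs and normalisations so that $\omega_v,\omega_v^\vee,E$ honestly satisfy the defining relations of the graded Lie algebra $\frso(\tilde V)$ --- together with the inputs in (b) that $H^\sdot(X,\bbQ)$ is of $\SO(\tilde V)$-type and that $\Lambda^\sdot H^*$ has a faithful subrepresentation, without which no such embedding into tensor powers could exist.
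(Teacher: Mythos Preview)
The paper does not prove this theorem: the sentence immediately preceding it reads ``The following theorem was proven in \cite{KSV}'', and no argument is supplied here. There is thus no proof in the present paper to compare your proposal against.

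That said, your outline follows the architecture of \cite{KSV}: realise $\frso(\tilde V,\tilde q)$ inside $\frso(H\oplus H^*)$ so that the spinor module $\Lambda^\sdot H^*$ inherits the desired graded action, then invoke semisimplicity and a tensor-power argument to embed $H^\sdot(X,\bbQ)$, and finally read off the Hodge-structure compatibility from equivariance under the Weil operator $W\in\frso(V)$. Your identification of the bracket check $[e^+_v,e^-_w]=q(v,w)E+\xi_{v,w}$ as the substantive step in~(a), and of the Clifford relation $vw+wv=2q(v,w)$ as its source, is exactly right.

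The one place where your sketch is genuinely thin is the $\SO$/$\Spin$ bookkeeping in~(b). The observation that $\tilde V$ occurs as a summand of $H^\sdot(X,\bbQ)$ does not by itself force the entire module to factor through $\SO(\tilde V)$ rather than $\Spin(\tilde V)$: a $\Spin$-representation can perfectly well contain both tensor-type and genuine spin-type summands. Likewise, your argument that $\Lambda^\sdot H^*$ is of $\SO(\tilde V)$-type proceeds by restriction to $\frso(V)$, but being of $\SO(V)$-type for the subalgebra does not automatically upgrade to $\SO(\tilde V)$-type for the larger algebra, since the central element $-1\in\Spin(\tilde V)$ need not lie in the image of $\Spin(V)$. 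You would either need to verify directly that $-1\in\Spin(\tilde V)$ acts trivially on both modules (for instance by tracking the eigenvalues of the grading element $E$, which are integers on $H^\sdot(X,\bbQ)$), or else work with $\Spin(\tilde V)$ throughout and check that $\Lambda^\sdot H^*$ is faithful for it. This is not a fatal flaw in your plan---the conclusion is correct and the missing verification is routine once one commits to a choice---but as written the argument for~(b) has a gap at precisely this point.
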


\subsection{The Kuga-Satake construction in families}

Let us consider a smooth projective morphism $\varphi\colon \XX\to B$
whose fibres are hyperk\"ahler manifolds. Let us fix a base point $b_0\in B$
and denote by $X$ the fibre $\XX_{b_0}$. We would like to apply
Theorem \ref{thm_ks} to the family $\varphi$ and obtain an embedding of
the corresponding variations of Hodge structures. In order to do so,
we need to construct a family of Kuga-Satake abelian varieties such
that the embeddings $\nu_i$ from Theorem \ref{thm_ks} are $\pi_1(B,b_0)$-equivariant.
We will explain below, that it is possible to do this after we pass
to a finite \'etale covering of $B$. The base $B$ can be an arbitrary
complex analytic space. 

Denote by $\mathrm{Aut}^P(X)\subset \mathrm{GL}(H^\sdot(X,\bbQ))$ the subgroup
of algebra automorphisms that fix the Pontryagin classes of $X$.
We will denote by $\mathrm{Aut}^P(X)_\bbZ$ the arithmetic subgroup
of $\mathrm{Aut}^P(X)_\bbQ$ that consists of all elements preserving the integral cohomology lattice.

Recall that $\frso(V,q)$ acts on $H^\sdot(X,\bbQ)$ by derivations,
and this action induces a homomorphism of algebraic groups
$$
\alpha\colon\Spin(V,q)\to \mathrm{Aut}^P(X),
$$
see section 3.1 in \cite{So1} and references therein.
Denote by $\mathrm{Aut}^+(X)$ the image of $\alpha$.
Let $\mathrm{MC}(X) = \mathrm{Diff}(X)/\mathrm{Diff}^\circ(X)$
be the mapping class group of $X$.
Here $\mathrm{Diff}(X)$ is the group of diffeomorphisms of $X$, and $\mathrm{Diff}^\circ(X)$ is
the subgroup of diffeomorphisms isotopic to the identity.
The mapping class group acts on the cohomology of $X$ fixing
Pontryagin classes, hence a group homomorphism 
$$
\beta\colon \mathrm{MC}(X)\to \mathrm{Aut}^P(X)_\bbZ.
$$

The following proposition generalizes \cite[Proposition 3.5]{So1}. Let
us mention that an analogous result was independently obtained in \cite[Theorem 4.16]{GKLR}.

\begin{prop}\label{prop_mc}
There exists a subgroup of finite index $\mathrm{MC}'\subset \mathrm{MC}(X)$ and
a group homomorphism $\beta'\colon \mathrm{MC}'\to \Spin(V,q)_\bbQ$ such that
$\alpha\circ \beta' = \beta$ and the image of $\beta'$ is contained in an arithmetic
subgroup of $\Spin(V,q)_\bbQ$.
\end{prop}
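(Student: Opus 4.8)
The plan is to construct $\beta'$ by lifting $\beta$ through the isogeny $\Spin(V,q) \to \mathrm{Aut}^+(X) \subset \mathrm{Aut}^P(X)$ after passing to a finite-index subgroup of $\mathrm{MC}(X)$. The natural first step is to understand the image of $\beta$. The mapping class group acts on $H^2(X,\bbZ)$ preserving the BBF form $q$, so we get a homomorphism $\mathrm{MC}(X) \to \rmO(V_\bbZ, q)$. A theorem of Verbitsky (together with Markman's monodromy results) describes the image of this representation: it is a finite-index subgroup of the integral orthogonal group, and the kernel of $\mathrm{MC}(X) \to \rmO(V_\bbZ,q)$ acts trivially on the whole cohomology $H^\sdot(X,\bbQ)$ once we restrict to a suitable finite-index subgroup (this is essentially the content of \cite[Proposition 3.5]{So1}, which the present statement generalizes). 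So the image of $\beta$ in $\mathrm{Aut}^P(X)_\bbZ$ lands, up to finite index, in the image of the special orthogonal group $\SO(V,q)$ acting by derivations, i.e. inside $\mathrm{Aut}^+(X)$ modulo a finite group.

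\textbf{Lifting through the isogeny.} Next I would define $\mathrm{MC}'$ to be the preimage under $\beta$ of $\mathrm{Aut}^+(X)_\bbZ := \mathrm{Aut}^+(X) \cap \mathrm{Aut}^P(X)_\bbZ$, intersected with the finite-index subgroup on which the kernel of the degree-2 action is absorbed; this is finite index in $\mathrm{MC}(X)$ because the image of $\beta$ is commensurable with $\mathrm{Aut}^+(X)_\bbZ$. Now $\alpha\colon \Spin(V,q) \to \mathrm{Aut}^+(X)$ is a surjection of algebraic groups with finite central kernel $\mu$ (namely $\ker\alpha \subset \{\pm 1\}$, or a slightly larger finite central subgroup depending on how the spin group acts on the even and odd parts of $H^\sdot$). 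The obstruction to lifting a given subgroup $\Gamma \subset \mathrm{Aut}^+(X)(\bbQ)$ to $\Spin(V,q)(\bbQ)$ is a class in $H^2(\Gamma, \mu)$; after replacing $\Gamma$ by a finite-index subgroup this obstruction can be killed. Concretely: choose a finite-index torsion-free subgroup, or argue that $\Spin(V,q)(\bbQ) \to \mathrm{Aut}^+(X)(\bbQ)$ is surjective on $\bbQ$-points (true since $H^1(\bbQ, \mu)$ is handled by enlarging the field, but over $\bbQ$ one passes to a congruence subgroup where the class vanishes). This gives $\beta'\colon \mathrm{MC}' \to \Spin(V,q)_\bbQ$ with $\alpha \circ \beta' = \beta$ on a possibly smaller $\mathrm{MC}'$, still of finite index.

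\textbf{Arithmeticity of the image.} Finally I would check that $\beta'(\mathrm{MC}')$ lies in an arithmetic subgroup of $\Spin(V,q)_\bbQ$. Since $\beta(\mathrm{MC}') \subset \mathrm{Aut}^P(X)_\bbZ$ preserves the integral lattice $V_\bbZ \subset V$, its image under the (integral, up to finite index) orthogonal representation lies in $\SO(V_\bbZ, q)$. The preimage of a fixed arithmetic subgroup of $\SO(V,q)$ under the isogeny $\Spin(V,q) \to \SO(V,q)$ is again arithmetic in $\Spin(V,q)$, and $\beta'(\mathrm{MC}')$ maps into this preimage up to the finite kernel; intersecting with it (and shrinking $\mathrm{MC}'$ once more if needed) places $\beta'(\mathrm{MC}')$ inside an arithmetic subgroup.

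\textbf{Main obstacle.} The delicate point is the lifting step: one must simultaneously (i) control which central extension one is lifting against — the kernel of $\alpha$ acting on the \emph{total} cohomology, which is a finite central subgroup of $\Spin(V,q)$ that need not equal $\{\pm 1\}$ — and (ii) ensure that after passing to a finite-index subgroup of $\mathrm{MC}(X)$ the $H^2$-obstruction to a homomorphic (not merely projective) lift actually vanishes, rather than just becoming torsion of bounded order. The cleanest route is probably to invoke that $\mathrm{MC}(X)$ has a finite-index subgroup which is residually finite / for which one can pass to a level structure trivializing the relevant finite quotient, together with the fact that $\Spin(V,q)(\bbZ_{(\ell)}) \to \SO(V,q)(\bbZ_{(\ell)})$ is surjective for suitable $\ell$, reducing the global lifting problem to one that can be solved on a congruence subgroup.
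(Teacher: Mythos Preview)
Your overall strategy is sound and would eventually work, but it is substantially more involved than the paper's argument, and the ``main obstacle'' you flag is one the paper sidesteps entirely.

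The paper does not attempt to lift through the isogeny $\alpha$ by killing a class in $H^2(\Gamma,\mu)$. Instead it uses the following observation: if $\Gamma\subset \mathrm{Aut}^P(X)_\bbZ$ and $\Gamma'\subset \Spin(V,q)_\bbQ$ are chosen to be \emph{torsion-free} arithmetic subgroups, then the natural maps $\Gamma\to\mathrm{O}(V,q)_\bbQ$ and $\Gamma'\to\mathrm{O}(V,q)_\bbQ$ are \emph{injective}, because in each case the kernel of the ambient algebraic-group map is finite. Thus both $\Gamma$ and $\Gamma'$ may be regarded as arithmetic subgroups of $\mathrm{O}(V,q)_\bbQ$, and one simply sets $\mathrm{MC}'=\beta^{-1}(\Gamma\cap\Gamma')$ and defines $\beta'$ as the composite $\mathrm{MC}'\to\Gamma\cap\Gamma'\hookrightarrow\Gamma'\subset\Spin(V,q)_\bbQ$. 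The arithmeticity of the image is then automatic, and no cohomological obstruction ever appears.

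In effect, the torsion-freeness trick is exactly what makes your lifting step work: a torsion-free arithmetic subgroup of $\Spin(V,q)$ maps isomorphically onto its image in $\mathrm{SO}(V,q)$, so that image \emph{does} admit a homomorphic lift, and intersecting with it kills the obstruction. Your proposal reaches for heavier tools (residual finiteness, congruence subgroups, local surjectivity of $\Spin\to\mathrm{SO}$) to achieve what this one-line observation already gives. The paper's route is both shorter and requires no case analysis of the kernel of $\alpha$ on the total cohomology.
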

\begin{proof}
Recall that the action of $\mathrm{Aut}^P(X)$ on $V$ preserves the form $q$, thus we
have a homomorphism $\mathrm{Aut}^P(X)\to\mathrm{O}(V,q)$, see \cite[Theorem 3.5(i)]{Ve2}.
Let $\Gamma\subset \mathrm{Aut}^P(X)_\bbZ$ be a torsion-free arithmetic subgroup.
Since the induced homomorphism $\Gamma\to \mathrm{O}(V,q)_\bbQ$ has finite kernel by \cite[Theorem 3.5(iv)]{Ve2},
it is actually injective. Analogously, let $\Gamma'\subset \Spin(V,q)_\bbQ$ be another
torsion-free arithmetic subgroup. Then the action of $\Spin(V,q)$ on $V$ induces
a homomorphism $\Gamma'\hrarr \mathrm{O}(V,q)_\bbQ$. This homomorphism is injective because $\Gamma'$
is torsion-free and the kernel of the map $\Spin(V,q)_\bbQ\to \mathrm{O}(V,q)_\bbQ$ is finite.

The construction of $\mathrm{MC}'$ is summarized in the following commutative diagram:
\begin{equation}\label{eqn_groups}
\begin{tikzcd}[]
\mathrm{MC}' \arrow[hook]{r}\arrow{d} & \mathrm{MC''} \arrow{d}\arrow[hook]{r} & \mathrm{MC}(X)\dar{\beta} \\
\Gamma\cap\Gamma' \arrow[hook]{r}\arrow[hook]{d} & \Gamma \arrow[hook]{r} \arrow[hook]{d} & \mathrm{Aut}^P(X)_\bbZ \arrow{dl} \\
\Gamma' \arrow[hook]{r} & \mathrm{O}(V,q)_\bbQ
\end{tikzcd}\nonumber
\end{equation}
Here $\Gamma\cap\Gamma'$ is of finite index in $\Gamma$, because both $\Gamma$ and $\Gamma'$ are
arithmetic subgroups of $\mathrm{O}(V,q)_\bbQ$. By the definition of $\Gamma$, it has finite index in $\mathrm{Aut}^P(X)_\bbZ$.
We then let $\mathrm{MC}' = \beta^{-1}(\Gamma\cap\Gamma')$, and define $\beta'$ to be the
composition of the two maps in the left column of the diagram and the embedding $\Gamma'\subset \Spin(V,q)_\bbQ$.
\end{proof}

\begin{cor}\label{cor_cover}
Let $\varphi\colon \XX\to B$ be a smooth family of hyperk\"ahler manifolds with $B$ connected, $b_0\in B$
a base point, $X$ the fibre of $\varphi$ over $b_0$, and $2n=\dim_\bbC(X)$. Let $V=H^2(X,\bbQ)$
and denote by $q$ the BBF form on $V$. Then the following statements hold:
\begin{enumerate}
\item There exists a finite \'etale covering $B'\to B$ such that the action of $\pi_1(B',b_0')$ on $H^{\sdot}(X,\bbQ)$
factors through a homomorphism $\rho\colon\pi_1(B',b_0')\to \Spin(V,q)_\bbQ$;
\item There exists a smooth family of compact complex tori $\psi\colon \AA\to B'$ and an
embedding of the variations of Hodge structures
$$
\bar{\nu}_i\colon R^{i+2n}\varphi'_*\bbQ(n)\hrarr R^{i+2d}\psi_*\bbQ(d),
$$
where $i = -2n,\ldots,2n$, $2d = \dim_\bbC(\AA_{b_0'})$ and $\varphi'\colon\XX'=\XX\times_B B'\to B'$;
\item Assume that there exists a monodromy-invariant cohomology class $h \in H^2(X,\bbQ)^{\pi_1(B,b_0)}$
such that $q(h)>0$. Then one can find $B'$ as above such that $\psi$
is a projective family of abelian varieties.
\end{enumerate}
\end{cor}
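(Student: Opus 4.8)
The plan is to read off all three statements from Proposition \ref{prop_mc}, the functoriality of the Kuga--Satake construction, and Theorem \ref{thm_ks}. For part (1): applying Ehresmann's theorem to the underlying smooth family of $\varphi$ gives a monodromy homomorphism $\mu\colon \pi_1(B,b_0)\to \mathrm{MC}(X)$ whose composition with $\beta$ is the action on $H^\sdot(X,\bbQ)$. Pull back the finite-index subgroup $\mathrm{MC}'\subset \mathrm{MC}(X)$ of Proposition \ref{prop_mc} to the finite-index subgroup $\Pi'=\mu^{-1}(\mathrm{MC}')\subset\pi_1(B,b_0)$, let $B'\to B$ be the corresponding connected finite \'etale cover and $b_0'\in B'$ a lift of $b_0$, so that $\pi_1(B',b_0')\cong \Pi'$. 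Then $\rho := \beta'\circ\mu|_{\Pi'}\colon \pi_1(B',b_0')\to \Spin(V,q)_\bbQ$ has image inside an arithmetic subgroup, and since $\alpha\circ\beta'=\beta$ the action of $\pi_1(B',b_0')$ on $H^\sdot(X,\bbQ)$ is exactly $\alpha\circ\rho$, as required.

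For part (2): over $B'$ the polarized variation $\mathbb{V}=R^2\varphi'_*\bbQ$ has flat BBF form $q$ and monodromy factoring through $\Spin(V,q)_\bbQ$. Form the local system $\mathbb{H}$ with fibre $H=\Cl(V,q)$ on which $\pi_1(B',b_0')$ acts via $\rho$ and left multiplication; since the image of $\rho$ preserves a lattice in $H$, this carries a flat $\bbZ$-structure. The prescription $H^{0,-1}_b=V_b^{2,0}\cdot H_\bbC$ of subsection \ref{subsec_ks} varies holomorphically with the period point, defining a weight $-1$ variation of Hodge structure on $\mathbb{H}$ that is fibrewise polarizable; taking $\mathbb{H}_\bbR^{\oplus m}/\mathbb{H}_\bbZ^{\oplus m}$ with the induced fibrewise complex structure produces a smooth family of compact complex tori $\psi\colon \AA\to B'$ with $2d=\dim_\bbC \AA_{b_0'}$. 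To globalize the embeddings: the $\frso(\tilde V,\tilde q)$-module embedding of Theorem \ref{thm_ks} is in particular $\frso(V,q)$-equivariant, hence $\Spin(V,q)$-equivariant because $\Spin(V,q)$ is connected, hence $\pi_1(B',b_0')$-equivariant, so it defines an injection of local systems. Since the Hodge structures on $H^\sdot(X,\bbQ)$ and on $\Lambda^\sdot(H^{*\oplus m})$ are both induced by the Weil operator $W\in\frso(V,q)$ (subsection \ref{subsec_ks}, \cite{KSV}) and the embedding commutes with $W$, it is fibrewise a morphism of Hodge structures; together with flatness this yields $\bar\nu_i\colon R^{i+2n}\varphi'_*\bbQ(n)\hookrightarrow R^{i+2d}\psi_*\bbQ(d)$.

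For part (3): the extra hypothesis supplies a monodromy-invariant $h\in V^{\pi_1(B,b_0)}$ with $q(h)>0$; after rescaling assume $h\in V_\bbZ$. The goal is to upgrade the weight $-1$ variation $\mathbb{H}$ of part (2) to a polarized one, for then $\psi\colon \AA\to B'$ acquires a relative polarization and is therefore a projective family of abelian varieties. I would first pass to a further finite cover of $B'$ — still finite \'etale over $B$ — so that $\rho$ lands in the subgroup of $\Spin(V,q)$ fixing the orientation of a maximal positive-definite subspace of $V_\bbR$; this is possible since $\mathrm{im}\,\rho$ lies in an arithmetic subgroup, and it removes the sign ambiguities in the Clifford-algebra polarization. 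Using the positive class $h$ one then writes down the standard Clifford-theoretic polarization form on $\mathbb{H}$ (as in \cite{KSV}, \cite{VG}, where positivity uses $q(h)>0$); it is flat because $h$ is monodromy-invariant, and compatible with the Hodge filtration because, once more, all Hodge structures in sight are induced by $W\in\frso(V,q)$. This presents $\psi$ as a family of polarized abelian varieties over $B'$.

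I expect the genuinely delicate point to be part (3): one must exhibit a \emph{single} flat form on $\mathbb{H}$ that polarizes the Kuga--Satake Hodge structure at every period point occurring in the image of $B'$, not merely at $b_0'$, and verify that the orientation and sign data can be trivialized on a finite cover without leaving the class of spaces \'etale over $B$. By contrast, parts (1) and (2) should amount to routine bookkeeping built on Proposition \ref{prop_mc} and Theorem \ref{thm_ks}.
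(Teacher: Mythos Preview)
Your argument for parts (1) and (2) matches the paper's proof essentially line for line: pull back $\mathrm{MC}'$ along the monodromy homomorphism to get $B'$, use that $\rho$ lands in an arithmetic subgroup of $\Spin(V,q)_\bbQ$ to find an invariant lattice in $H=\Cl(V,q)$, and globalize the embeddings of Theorem~\ref{thm_ks} by their $\Spin(V,q)$-equivariance.

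For part (3) your route diverges from the paper's. You construct the polarization by hand on the family $\psi$ already produced in (2), using the monodromy-invariant class $h$ and the Clifford-algebra formula, and then invoke that a relatively polarized family of complex tori is projective. The paper instead observes that the Kuga--Satake polarization can be taken $\Spin(V,q)$-invariant (so flatness is automatic once monodromy factors through $\Spin$), fixes a polarization type, and then, after a further finite \'etale cover, obtains a period map $B'\to \Gamma\backslash\bbH_{2d}$ to an arithmetic quotient of Siegel space carrying a \emph{projective} universal family (\cite[Theorem~8.11]{PS}); $\psi$ is then the pull-back. The paper's approach sidesteps exactly the ``delicate point'' you flag---whether one fixed form polarizes every fibre---by packaging it into the existence of the fine moduli space, and it delivers projectivity of $\psi$ for free. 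Your approach is more self-contained but carries the bookkeeping you anticipated; note also that the orientation step you propose is unnecessary, since $\Spin(V,q)_\bbR$ is already connected and any subgroup of its rational points acts through the identity component of $\mathrm{SO}(V,q)_\bbR$.
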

\begin{proof}
(1) The monodromy action on the cohomology is induced by a homomorphism $\rho\colon\pi_1(B,b_0)\to \mathrm{MC}(X)$.
Using the subgroup $\mathrm{MC}'\subset \mathrm{MC}(X)$ from Proposition \ref{prop_mc}, we get
a finite index subgroup $\rho^{-1}(\mathrm{MC}')\subset \pi_1(B,b_0)$ and the corresponding covering $B'$.
It satisfies the required properties by the definition of $\mathrm{MC}'$.

(2) Denote by $\tilde{B}$ the universal covering of $B$. Let $\HH$ be the variation of
the Kuga-Satake Hodge structures over $\tilde{B}$ that can be fibrewise described 
as in section \ref{subsec_ks}. The fact that the fibrewise construction indeed defines
a variation of Hodge structures was proved e.g. in \cite[section 3.2, in particular Proposition 3.6]{SS}.
By Proposition \ref{prop_mc}, the image of $\pi_1(B',b_0')$ under $\rho$ is contained in an
arithmetic subgroup of $\Spin(V,q)_\bbQ$. This implies that the action of the fundamental group of
$B'$ preserves some lattice $\Lambda\subset \HH$. Taking the quotient of $\HH/\Lambda$
by the action of $\pi_1(B',b_0')$, we get a family of complex tori $\psi\colon \AA\to B'$.
The embeddings of Hodge structures $\nu_i$ from Theorem \ref{thm_ks} are monodromy-equivariant.
This implies that they induce the embeddings $\bar{\nu_i}$ of the corresponding variations
of Hodge structures over $B'$, see \cite[Proposition 3.7]{So1}.

(3) Under our assumptions, the Kuga-Satake Hodge structures admit a $\Spin(V,q)$-invariant polarization,
see e.g \cite{KSV}. We can therefore fix a polarization type for the Kuga-Satake abelian varieties.
Choosing appropriate finite \'etale cover $B'\to B$, we obtain a map from $B'$ to an arithmetic
quotient of the Siegel half-space $\Gamma \backslash \bbH_{2d} $. For a suitable choice of
$\Gamma$, there exists a projective universal family of abelian varieties over $\Gamma \backslash \bbH_{2d} $,
see e.g. \cite[Theorem 8.11]{PS}. We can then construct the family $\psi$ as the pull-back of the universal family.
\end{proof}

\section{The manifolds of ${\rm K3}^{[n]}$, generalized Kummer and OG6 deformation types}\label{sec_main}

In this section, we recall the results of \cite{DM1}, \cite{DM2}, \cite{Xu} and \cite{MRS}.
They provide the necessary geometric input for the proof of Corollaries \ref{cor_hk},
\ref{cor_abs} and \ref{cor_MT}, showing that in each of the
${\rm K3}^{[n]}$, generalized Kummer and OG6 deformation types there exists at least
one variety with abelian motive.

\subsection{Hilbert schemes of points on K3 surfaces and generalized Kummer varieties}\label{sec_k3n}
Let $S$ be a non-singular complex projective surface. We will denote by $S^{[n]}$
the Hilbert scheme of length $n$ subschemes of $S$ and by $S^{(n)}$ the $n$-th symmetric power
of $S$. Recall that $S^{[n]}$ is non-singular and there exists a Hilbert-Chow morphism $\chi\colon S^{[n]}\to S^{(n)}$.

In \cite{DM1}, the natural stratification of $S^{(n)}$ was used to describe the
motive of $S^{[n]}$. We briefly recall the construction.
Let $\nu = (\nu_1,\ldots, \nu_n)$ denote a partition of $n$,
so that $n = \nu_1 + 2\nu_2 + \ldots + n\nu_n$ and $\nu_i\ge 0$. Let $l(\nu) = \sum_i \nu_i$
and denote by $S^{(\nu)}$ the product $\prod_i S^{(\nu_i)}$. Recall
that the points of $S^{(\nu_i)}$ are the $0$-cycles of length $\nu_i$ in $S$.
Consider the morphism $S^{(\nu)} \to S^{(n)}$ that sends a collection of cycles $(x_1,\ldots, x_n)$ to
the cycle $x_1 + 2x_2 + \ldots + n x_n$. Let $Z_\nu$ denote the product
$S^{(\nu)}\times_{S^{(n)}}S^{[n]}$ with the reduced scheme structure. We get
a commutative square:
\begin{equation}\label{eqn_strat}
\begin{tikzcd}[]
Z_\nu \arrow{r}{q_\nu}\arrow{d}{p_\nu} & S^{[n]} \arrow{d}{\chi} \\
S^{(\nu)} \arrow{r} & S^{(n)}
\end{tikzcd}\nonumber
\end{equation}

By construction, $\dim(S^{(\nu)}) = 2l(\nu)$ and one can show that $\dim(Z_\nu) = n + l(\nu)$.
Note that the symmetric powers of $S$ have quotient singularities. Hence
all the natural operations on Chow groups with rational coefficients are well-defined.
Consider the morphisms $q_{\nu*}\circ p_\nu^*\colon \mathrm{CH}^k_\bbQ(S^{(\nu)}) \to \mathrm{CH}^{k +n -l(\nu)}_\bbQ(S^{[n]})$.
It is shown in \cite[Theorem 5.4.1]{DM1} that the sum of these morphisms gives an isomorphism
of Chow groups
$$
\mathrm{CH}^\sdot_\bbQ(S^{[n]}) \simeq \oplus_\nu \mathrm{CH}^{\sdot - n + l(\nu)}_\bbQ(S^{(\nu)}),
$$
where the direct sum is taken over all partitions of $n$. One deduces from this
an isomorphism of motives
$$
\bfM(S^{[n]}) \simeq \oplus_\nu \bfM(S^{(\nu)})(n - l(\nu)),
$$
which actually holds on the level of Chow motives with rational coefficients,
cf. \cite[Theorem 6.2.1]{DM1}. The motive of $S^{(\nu)}$ is by definition
a submotive of $\otimes_i \bfM(S^{\nu_i})$. The latter is abelian if
the motive of $S$ is abelian. It is shown in \cite[Theorem 7.1]{A1} that
for any complex projective K3 surface $S$ the motive of $S$ is abelian,
hence $\bfM(S^{[n]})\in \catMab$ .

Analogous arguments show that the motive of a generalized Kummer variety
is abelian. Namely, let $A$ be a complex abelian surface.
Consider the Albanese morphism $a\colon A^{[n+1]}\to A$ which
sends an $(n+1)$-tuple of points on $A$ into their sum.
The fibre $K^n\!A = a^{-1}(0)$ is called the generalized Kummer variety.

To describe the motive of $K^n\!A$ one uses the construction
described above, replacing the symmetric power $A^{(n)}$ by the
fibre of the Albanese morphism $A^{(n)} \to A$. The fibres
of the morphisms $A^{(\nu_i)}\to A$ are finite quotients of abelian varieties.
Therefore, repeating the arguments of \cite{DM1} or using more
general results of \cite{DM2}, we find that $\bfM(K^n\!A) \in \catMab$.
This was also shown in \cite[Theorem 1.1]{Xu} using similar methods.

\subsection{Manifolds of OG6 deformation type}\label{sec_og6}

The manifolds of OG6 deformation type were discovered by O'Grady, see \cite{OG2}.
These 6-dimensional manifolds have originally been constructed as desingularizations
of certain moduli spaces of sheaves on abelian surfaces.
To produce one manifold of this deformation type with abelian motive, one can
use a construction from \cite{MRS}.
In \cite[section 6]{MRS} one finds a diagram of the form
$$
X \stackrel{f}{\longleftarrow} Y_2\stackrel{g_2}{\lrarr} Y_1\stackrel{g_1}{\lrarr} Y.
$$
In this diagram: $X$ is an OG6-type manifold; $Y$ is a ${\rm K3}^{[3]}$-type manifold;
$f$ is a surjective generically finite morphism; $g_1$ and $g_2$ are blow-ups
with centres $Z_1$ and $Z_2$, where $Z_1$ is the disjoint union of 256 projective spaces
and $Z_2$ is isomorphic to $({\rm Bl}_{(A\times A^\vee)[2]}(A\times A^\vee))/\pm 1$ for some
abelian surface $A$ (see the proof of \cite[Proposition 6.1]{MRS}). The motives
of $Y$, $Z_1$ and $Z_2$ are abelian, hence the motive of $Y_2$ is also abelian.
By projection formula, the motive of $X$ embeds into the motive of $Y_2$, hence it is
also abelian.

\section{Deformation principle}\label{sec_def}

\subsection{The setting}\label{sec_set}
In this section, we will assume that
$$\pi\colon \XX\to B$$
is a smooth proper morphism with connected fibres between
complex analytic spaces, the base $B$ is a connected quasi-projective variety
and for any $b\in B$ the fibre $\XX_b$ is projective. Moreover,
we will assume that there exists a line bundle $\LL\in \mathrm{Pic}(\XX)$ and
a dense Zariski-open subset $U\subset B$ such that $\LL|_{\XX_b}$ is ample for 
any $b\in U$. Let $\XX_U = \pi^{-1}(U)$. We will assume that $\XX_U$ is
a quasi-projective variety, and that the bundle $\LL|_{\XX_U}$ and the
morphism $\pi|_{\XX_U}$ are algebraic.

Let us emphasize that we do not assume the total space $\XX$
to be an algebraic variety, and for the families that we consider below
(see Proposition \ref{prop_curve}) it will typically not be algebraic.


\subsection{The statement and preliminary constructions}

The following proposition is a version of the deformation principle for
motivated cohomology classes, see section \ref{sec_mot} and \cite[Theorem 0.5]{A1}.
We remark that similar results about specialization of motivated cycles
were obtained in \cite[Corollary 4.7]{Ca}. We can not apply those results in
our setting, because we do not assume the total space $\XX$ of our family to
be an algebraic variety.

\begin{prop}\label{prop_def}
In the above setting \ref{sec_set}, let $b_0, b_1\in B$ be two points and consider
a section $\xi \in H^0(B,R^{2p}\pi_*\bbC)$.
Let $\xi_{b_i}\in H^{2p}(\XX_{b_i}, \bbC)$, $i=0,1$ be the values of $\xi$ at $b_i$. Then $\xi_{b_0}$ is 
motivated if and only if $\xi_{b_1}$ is motivated.
\end{prop}

The proof is given below in \ref{sec_def_proof}.
It uses the same idea as in \cite{A1},
but in our case the variety $\XX$ is not algebraic, so
we start by explaining some preliminary constructions.

First note that since $B$ is quasi-projective, we can connect any
two points in it by a chain of integral curves. Since $U$ is dense
in $B$, we can also make sure that each of those curves intersects $U$.
Choosing intermediate points between $b_0$ and $b_1$ at the
intersections of the curves in the chain, we reduce to the case
when $B$ is an integral quasi-projective curve. We may pull back
the family $\XX$ to the normalization of this curve, and
then assume that $B$ is smooth.

The case when both $b_0$ and $b_1$ lie in $U$ reduces to \cite[Theorem 0.5]{A1}, since in this case
we can shrink $B$ and assume that $\LL$ is $\pi$-ample. If both $b_0$ and $b_1$
lie in $B\setminus U$, we can choose an intermediate point $b_3\in U$,
and hence we reduce to the case when one of the points
from the statement of Proposition \ref{prop_def} lies in $U$
and the other in $B\setminus U$. To fix the notation, we
will assume that $b_0\in U$, $b_1\in B\setminus U$. 
After shrinking $B$, we may moreover assume that  $U = B\setminus \{b_1\}$.
We will denote by $X_i = \pi^{-1}(b_i)$ the two fibres.

The curve $B$ is quasi-projective, hence there exists a
projective curve $\bar{B}$ that contains $B$ as a Zariski-open subset.
Let us denote the boundary $\bar{B}\setminus B =\{z_1,\ldots,z_n\}$,
and let $\bar{U} = \bar{B}\setminus\{b_1\}$. Our next goal
is to find a suitable compactification of $\XX$. We use the fact
that over a punctured neighbourhood of every point $z_i$ the
morphism $\pi$ is projective and we can extend it over the puncture.
The details appear in the following lemma.

\begin{lem}\label{lem_xbar}
There exists a compact complex manifold $\bar{\XX}$, a flat morphism $\bar{\pi}\colon\bar{\XX}\to\bar{B}$,
a line bundle $\bar{\LL}\in\mathrm{Pic}(\bar{\XX})$ and an open embedding $j\colon \XX \hrarr \bar{\XX}$
that satisfy the following conditions:
\begin{enumerate}
\item $\bar{\pi}|_{j(\XX)} = \pi$ and $\bar{\XX}\setminus j(\XX) = \cup_{i=1}^n \bar{\pi}^{-1}(z_i)$;
\item The open subset $\bar{\XX}_{\bar{U}} = \bar{\pi}^{-1}(\bar{U}) \subset \bar{\XX}$ is a quasi-projective variety;
\item The restriction of $\bar{\pi}$ to $\bar{\XX}_{\bar{U}}$ is an algebraic morphism onto $\bar{U}$ and
$\bar{\LL}|_{\bar{\XX}_{\bar{U}}}$ is $\bar{\pi}$-ample algebraic bundle.
\end{enumerate}
\end{lem}
\begin{proof}
Recall that $\pi_U\colon \XX_U\to U$ is a smooth morphism
of quasi-projective varieties with relatively ample line bundle $\LL_U = \LL|_{\XX_U}$.
For a big enough integer $k$, consider the vector bundle $E = \pi_{U*}(\LL^k_U)$ over $U$.
We can find such $k$ that the canonical morphism $\pi_U^*(E)\to \LL_U$ induces a closed embedding
$\XX_U\hrarr \bbP(E)$. Note that $E$ is an algebraic vector bundle over the quasi-projective
curve $U$, so we can extend it to a vector bundle $\bar{E}$ over $\bar{U}$.
Denote by $\ZZ$ the closure of $\XX_U$ in $\bbP(\bar{E})$. Then $\ZZ$ is a quasi-projective
variety fibred over $\bar{U}$. It may be singular, the singularities lying over the points $z_i$.
By a theorem of Hironaka, there exists a resolution of singularities $\rho\colon \hat{\ZZ}\to \ZZ$.
This means that $\hat{\ZZ}$ is a non-singular quasi-projective variety and the morphism
$\rho$ is a composition of blow-ups with centres lying over the singular locus of $\ZZ$,
see e.g. \cite[Definition 7.1 and Theorem 7.5]{GPR}. In our case the blow-ups occur in
the fibres over the points $z_i$, and $\rho$ is an isomorphism over $\XX_U$.
Hence $\hat{\ZZ}$ contains $\XX_U$ as an open subset, and the morphism
$\hat{\ZZ}\to \bar{U}$ agrees with $\pi_U$ on this subset. We get the manifold $\bar{\XX}$
by gluing $\hat{\ZZ}$ with $\XX$ along their common open subset $\XX_U$.
Then $\XX$ is embedded into $\bar{\XX}$ by construction, and we get a morphism $\bar{\pi}\colon\bar{\XX}\to \bar{B}$
that has the claimed property (1). The morphism $\bar{\pi}$ is flat, because it is equidimensional and $\bar{\XX}$, $\bar{U}$
are smooth manifolds (see \cite[page 114]{GPR}).

The open subset $\bar{\XX}_{\bar{U}}$ is by construction identified with $\hat{\ZZ}$,
hence it is quasi-projective and (2) is satisfied. The first part of (3) is
also satisfied because $\hat{\ZZ}$ is a blow-up of the algebraic variety $\ZZ$.

Next we prove the existence of the line bundle $\bar{\LL}$.
Consider the line bundle $\LL' = \OO_{\bbP(\bar{E})/\bar{U}}(1)|_\ZZ$.
The bundle $\LL'$ is relatively ample over $\bar{U}$ and $\LL'|_{\XX_U}\simeq \LL_U^k$.
The blow-up morphism $\rho$ is projective, and if we denote by $E_j$ the exceptional
divisors of $\rho$, the bundle $\LL'' = \rho^*\LL'(\sum a_j E_j)$ is relatively ample over $\bar{U}$ for
suitably chosen integers $a_j$. We have $\LL''|_{\XX_U} \simeq \LL_U^k$ and
$\LL^k|_{X_U} \simeq \LL_U^k$, and we define $\bar{\LL}$ by gluing $\LL''$
and $\LL^k$ over the open subset $\XX_U$. The second part of (3) is then satisfied because
$\bar{\LL}|_{\bar{\XX}_{\bar{U}}} \simeq \LL''$ is algebraic and relatively ample over $\bar{U}$.
\end{proof}

In what follows we will implicitly identify $\XX$ with its image in $\bar{\XX}$
under the embedding $j$ from the lemma above.

\begin{lem}\label{lem_moish}
The manifold $\bar{\XX}$ from Lemma \ref{lem_xbar} is Moishezon.
There exists a projective manifold $\hat{\XX}$ and a birational
morphism $r\colon \hat{\XX}\to \bar{\XX}$ that is an isomorphism over $\bar{\XX}\setminus X_1$
and such that $r^{-1}(X_1)$ is a simple normal crossing divisor.
\end{lem}
\begin{proof}
We use the notation from the statement of Lemma \ref{lem_xbar}.
We refer to \cite[chapter VII, \S 6]{GPR} for the discussion of Moishezon manifolds.
Our proof is standard: we use the line bundle $\bar{\LL}$ (or rather its power) to produce
a birational map from $\bar{\XX}$ to a projective variety.

Let $\bar{\pi}_{\bar{U}} = \bar{\pi}|_{\bar{\XX}_{\bar{U}}}$
and $E = \bar{\pi}_{\bar{U}*}( \bar{\LL}|_{\bar{\XX}_{\bar{U}}} )$.
After possibly replacing the bundle $\bar{\LL}$ by its power, we may assume that the
canonical morphism $\bar{\pi}_{\bar{U}}^*(E) \to \bar{\LL}|_{\bar{\XX}_{\bar{U}}}$
defines a closed embedding $\bar{\XX}_{\bar{U}}\hrarr \bbP(E)$. Now consider the
vector bundle $E' = \bar{\pi}_*(\bar{\LL})$ over $\bar{B}$ and note that
$E'|_{\bar{U}} \simeq E$. Since we do not assume that $\bar{\LL}|_{X_1}$ is ample
(or even globally generated) the morphism $\bar{\pi}^*(E')\to \bar{\LL}$ only
defines a meromorphic map $\varphi\colon \bar{\XX} \dashrightarrow \bbP(E')$
whose restriction to $\bar{\XX}_{\bar{U}}$ coincides with the embedding
$\bar{\XX}_{\bar{U}}\hrarr \bbP(E)$. The indeterminacy locus of $\varphi$
is contained in the fibre $X_1$. We can resolve the indeterminacy of $\varphi$
by a sequence of blow-ups $r'\colon \tilde{\XX}\to \bar{\XX}$, so that
$\varphi$ lifts to a morphism $\tilde{\varphi}\colon\tilde{\XX}\to \bbP(E')$.
The latter morphism is proper, hence its image is an analytic subvariety
that we denote by $\YY$. Since $\YY$ is a subvariety of the projective manifold
$\bbP(E')$, it is also projective. This shows that $\bar{\XX}$ is Moishezon.

Next we show that there is a birational morphism $r''\colon \hat{\YY}\to\tilde{\XX}$
for some projective manifold $\hat{\YY}$. We consider the birational map
$\tilde{\varphi}^{-1}\colon \YY\dashrightarrow \tilde{\XX}$. We resolve the indeterminacy
of the latter map and the singularities of $\YY$ by a sequence of blow-ups
$s\colon \hat{\YY}\to \YY$. Then $\hat{\YY}$ is a projective manifold and there is
a birational morphism $r''\colon \hat{\YY}\to\tilde{\XX}$. Note that the blow-ups occur
only in the fibre over the point $b_1\in B$, so all the constructed birational
morphisms are isomorphisms over $\bar{\XX}_{\bar{U}}$. The preimage of $b_1$ in
$\hat{\YY}$ is a divisor. By blowing up $\hat{\YY}$ further, we make sure that
this divisor has simple normal crossings. We define $\hat{\XX}$ to be the result
of this final sequence of blow-ups and $r$ to be the induced map to $\bar{\XX}$.
By construction, $\bar{\XX}$ has the claimed properties.
\end{proof}

Consider the morphism $r\colon \hat{\XX}\to\bar{\XX}$ constructed in Lemma \ref{lem_moish}.
Then $D = (\bar{\pi}\circ r)^{-1}(b_1)$ is a simple normal crossing divisor by the lemma,
and $r(D) = X_1$. Let us denote by $\hat{X}_1$ the irreducible component of $D$
that dominates $X_1$. It is a non-singular projective variety. The restriction of $r$
to $\hat{X}_1$ is a birational morphism $r_1\colon \hat{X}_1\to X_1$.
We denote by $\alpha_i\colon X_i \hookrightarrow \bar{\XX}$ the closed immersions.
Since $r$ is an isomorphism over $\bar{\XX}\setminus X_1$, the immersion
$\alpha_0$ lifts to $\hat{\alpha}_0\colon X_0 \hookrightarrow \hat{\XX}$.
We summarize our constructions in a diagram:
\begin{equation}\label{eqn_diagram}
\begin{tikzcd}
X_0 \arrow[equal]{d}\arrow[bend left, hook]{rr}{\hat{\alpha}_0}\arrow[hook]{r} & \XX_U \arrow[hook]{d}\arrow[hook]{r}
& \hat{\XX} \arrow{d}{r}\arrow[hookleftarrow]{r}{\hat{\alpha}_1} & \hat{X}_1\arrow{d}{r_1} \\
X_0 \arrow[hook]{r}\arrow[bend right, hook]{rr}{\alpha_0} & \XX \arrow[hook]{r} & \bar{\XX} \arrow[hookleftarrow]{r}{\alpha_1} & X_1
\end{tikzcd}
\end{equation}

The projective manifold $\hat{\XX}$ can be used as the 0-th term of a simplicial
resolution for $\bar{\XX}$, which is an augmented simplicial variety
$\mathcal{S}_\sdot \to \bar{\XX}$ with $\mathcal{S}_i$ smooth projective and $\mathcal{S}_0 = \hat{\XX}$.
Such resolution can be constructed as in \cite[6.2.5]{D2}. Note that both
$\bar{\XX}$ and $\hat{\XX}$ are smooth manifolds of the same dimension, hence by
projection formula \cite[IX.7.3]{Iv} the pull-back on cohomology $r^*$ is injective.
Thus we have the following exact sequence (cf. \cite[Proposition 8.2.5]{D2}):
$$
0\lrarr H^{2p}(\bar{\XX}, \bbC) \stackrel{r^*}{\lrarr} H^{2p}(\hat{\XX},\bbC) \stackrel{\delta_0^*-\delta_1^*}{\lrarr} H^{2p}(\mathcal{S}_1,\bbC),
$$
where $\delta_0,\delta_1\colon \mathcal{S}_1 \rightrightarrows \mathcal{S}_0 = \hat{\XX}$ are the face maps.

\begin{defn}\label{defn_mot1}
Define the motive ${\bf H}^{2p}(\bar{\XX})$ to be the kernel of the morphism
$$\delta_0^*-\delta_1^*\colon {\bf H}^{2p}(\hat{\XX}) \to {\bf H}^{2p}(\mathcal{S}_1).$$
\end{defn}

Let $G_i = \pi_1(B,b_i)$. There exist canonical isomorphisms $H^0(B,R^{2p}\pi_*\bbC) \simeq H^{2p}(X_i,\bbC)^{G_i}$.

\begin{lem}\label{lem_kernels}
We have the following equalities:
\begin{equation}\label{eqn_ker}
\ker(\alpha_0^*\colon H^{2p}(\bar{\XX},\bbC) \to H^{2p}(X_0,\bbC)) = \ker(\alpha_1^*\colon H^{2p}(\bar{\XX},\bbC) \to H^{2p}(X_1,\bbC)),
\end{equation}
and
\begin{equation}\label{eqn_im}
\im(\alpha_i^*\colon H^{2p}(\bar{\XX},\bbC) \to H^{2p}(X_i,\bbC)) = H^{2p}(X_i,\bbC)^{G_i}, \quad i=0,1.
\end{equation}
\end{lem}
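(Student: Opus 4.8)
The plan is to analyze the two fibres $X_0$ and $X_1$ inside $\bar{\XX}$ using the topology of the family over the curve $B$. The key observation is that $B \setminus \{b_1\}$ is a curve and $b_1$ is a single point, so the local picture near $b_1$ is that of a one-parameter degeneration; the geometric monodromy around $b_1$ governs how $H^{2p}(X_1)$ sits inside $H^{2p}(\bar{\XX})$. First I would fix a small disc $\Delta \subset B$ around $b_1$ with $\Delta^* = \Delta \setminus \{b_1\}$, and restrict the family to $\Delta$; by Ehresmann the restriction over $\Delta^*$ is a locally trivial fibration with fibre $X_0$ (up to the $G_0$-action), and the inclusion $\XX_\Delta \hookrightarrow \bar{\XX}$ of the part of $\bar{\XX}$ lying over $\Delta$ is a homotopy equivalence (after possibly shrinking, $\bar{\XX}$ deformation retracts onto the central fibre $X_1$, and also $\bar{\XX}$ retracts onto $\XX_\Delta$ since the simple normal crossing boundary $\bar{\XX} \setminus \XX$ lies over $b_1$ as well — here one uses that $\XX_U$ is a deformation retract of $\XX$ because $U$ is the complement of a point in a curve).

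For the equality \eqref{eqn_im} of images: the restriction map $H^{2p}(\bar{\XX},\bbC) \to H^{2p}(X_i,\bbC)$ factors through the $G_i$-invariants $H^{2p}(X_i,\bbC)^{G_i}$ since any class pulled back from $\bar{\XX}$ is automatically monodromy-invariant. For surjectivity onto the invariants, when $i=0$ this is the local invariant cycle theorem (or simply the fact that $\bar{\XX}$ retracts onto $\XX_U$, which retracts onto a fibre of the fibration $\XX_U \to U$, whose cohomology in a fixed degree is exactly the $G_0$-invariants by the Leray spectral sequence for a fibration over a curve); when $i=1$, using $\alpha_1^* = (\text{restriction to } X_1)$ and the retraction of $\bar{\XX}$ onto $X_1$, the map $\alpha_1^*$ is an isomorphism $H^{2p}(\bar{\XX},\bbC) \xrightarrow{\sim} H^{2p}(X_1,\bbC)$, and $G_1 = \pi_1(B,b_1)$ acts trivially on $H^{2p}(X_1,\bbC)$ through this identification because $B$ retracts onto... no — rather, $G_1$ acts on $H^{2p}(X_1)$ through the limit monodromy, but the retraction $\bar{\XX} \simeq X_1$ is $G_1$-equivariant for the trivial action on the source, forcing the image of $\alpha_1^*$ to lie in the invariants; combined with surjectivity of $\alpha_1^*$ this gives $H^{2p}(X_1,\bbC)^{G_1} = H^{2p}(X_1,\bbC)$ and hence \eqref{eqn_im} for $i=1$. (In fact $G_1$ acts trivially here precisely because $B$ is a curve with $b_1$ removed giving $U$, so $\pi_1(B,b_1) \to \pi_1(U,b_0)$ is... one must be slightly careful; I would phrase this via: $\bar{\XX} \to B$ with $\bar{\XX}$ retracting onto the fibre over $b_1$.)

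For the equality \eqref{eqn_ker} of kernels, this is the crux. Consider the composition $H^{2p}(\bar{\XX},\bbC) \xrightarrow{\alpha_0^*} H^{2p}(X_0,\bbC)$. Using the homotopy equivalences above, $\alpha_0^*$ is identified with the restriction $H^{2p}(\bar{\XX},\bbC) \to H^{2p}(\XX_{\Delta^*},\bbC) \to H^{2p}(X_0,\bbC)$, and $\alpha_1^*$ with the isomorphism $H^{2p}(\bar{\XX},\bbC) \xrightarrow{\sim} H^{2p}(X_1,\bbC)$. So I must show: a class $\beta \in H^{2p}(\bar{\XX},\bbC)$ restricts to zero on $X_0$ iff it restricts to zero on $X_1$. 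One direction: if $\beta$ vanishes on $X_1$, then since $\alpha_1^*$ is an isomorphism, $\beta = 0$, so it certainly vanishes on $X_0$. The other direction is the real content: if $\alpha_0^*\beta = 0$ I must conclude $\beta = 0$, i.e. $\alpha_0^*$ is itself injective. This should follow because $X_0$ is a deformation retract of $\XX_{\Delta^*}$, which is a deformation retract of $\bar{\XX}$ minus its central fibre, but $\bar{\XX}$ itself retracts onto the central fibre region — I expect the cleanest route is to use that the inclusion $X_0 \hookrightarrow \bar{\XX}$ is, up to homotopy, the inclusion of a nearby fibre into the total space of a degeneration over a disc, which admits a retraction onto $X_0$ through the family trivialization over $\Delta^*$ combined with the retraction of $\Delta$ onto $\Delta^*$... this last step is false in general, so instead I would argue: $\alpha_0^* = r_0^* \circ \hat\alpha_0^*$ where the diagram \eqref{eqn_diagram} shows $\hat\alpha_0$ lifts $\alpha_0$ into the smooth projective $\hat{\XX}$; then use that $\hat\alpha_0^*$ on $H^{2p}$ composed with Gysin $\hat\alpha_{0*}$ is cup product with the class of $X_0$, which is trivial on cohomology but the point is $\hat\alpha_0^* r^* = \alpha_0^*$ and one reduces \eqref{eqn_ker} to a statement inside $\hat{\XX}$ where all varieties are smooth projective, then invoke the classical argument from \cite{A1} verbatim. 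The main obstacle is precisely keeping track of which restriction maps are injective versus merely factoring through invariants when passing between the Moishezon $\bar{\XX}$ and its projective resolution $\hat{\XX}$; once \eqref{eqn_ker} is phrased on $\hat{\XX}$, it becomes the standard fact that two fibres of a smooth projective family over a curve have the same image/kernel in the cohomology of a compactification, which is handled by the local invariant cycle theorem together with the injectivity of $r^*$ already noted in the excerpt.
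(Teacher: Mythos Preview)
Your proposal rests on a misreading of the geometry of $\bar{\XX}$. You assert that the simple normal crossing boundary $\bar{\XX}\setminus\XX$ ``lies over $b_1$ as well'', and from this deduce that $\bar{\XX}$ deformation retracts onto $X_1$ (so that $\alpha_1^*$ is an isomorphism) and onto $\XX_U$. That is not the setup: the morphism $\pi\colon\XX\to B$ is smooth over \emph{all} of $B$, including $b_1$, so $X_1\subset\XX$ lies in the interior; the boundary $\bar{\XX}\setminus\XX$ is added over the punctures $\bar{B}\setminus B$ of the quasi-projective curve $B$ and has nothing to do with $b_1$. Consequently $\alpha_1^*$ is not an isomorphism, $\alpha_0^*$ is not injective, and $G_1=\pi_1(B,b_1)$ need not act trivially on $H^{2p}(X_1,\bbC)$. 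Your fallback --- lifting to $\hat{\XX}$ and invoking ``the classical argument from \cite{A1} verbatim'' --- does not repair this: over $b_1$ the fibre of $\hat{\XX}\to\bar{B}$ is a reducible normal crossing divisor rather than a smooth fibre, and the local invariant cycle theorem near $b_1$ only concerns the local monodromy (trivial here, since $\pi$ is smooth across $b_1$), not the global group $G_i$.

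The paper's argument is quite different and uses no retractions. For \eqref{eqn_ker} one observes that both $\alpha_i^*$ factor through the single composition
\[
H^{2p}(\bar{\XX},\bbC)\lrarr H^{2p}(\XX,\bbC)\lrarr H^0(B,R^{2p}\pi_*\bbC),
\]
the second arrow being the Leray edge map, followed by the evaluation of a global section of a local system at $b_i$, which is injective since $B$ is connected; hence both kernels equal the kernel of that common composition. For \eqref{eqn_im} it then suffices to prove surjectivity for $i=0$. Over $U$ the relative polarisation forces the Leray spectral sequence to degenerate, so $\gamma\colon H^{2p}(\XX_U,\bbC)\to H^{2p}(X_0,\bbC)^{G_0}$ is surjective; since the target is pure of weight $2p$, the restriction of $\gamma$ to $W_{2p}H^{2p}(\XX_U,\bbC)$ is still surjective; and by \cite{D1} this weight piece is exactly the image of $H^{2p}(\bar{\XX},\bbC)$ under restriction, because $\bar{\XX}$ is a smooth (Moishezon) compactification of $\XX_U$. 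This mixed-Hodge-theoretic identification of the image of a compactification is the ingredient your sketch is missing.
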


\begin{proof}
The kernels in (\ref{eqn_ker}) can be identified with the kernel of
the composition
\begin{equation}\label{eqn_comp}
H^{2p}(\bar{\XX},\bbC) \to H^{2p}(\XX,\bbC) \to H^0(B,R^{2p}\pi_*\bbC),
\end{equation}
where the last morphism comes from the Leray spectral sequence. This proves (\ref{eqn_ker}).

To prove (\ref{eqn_im}), it is sufficient to check that $H^{2p}(\bar{\XX},\bbC) \to H^{2p}(X_0,\bbC)^{G_0}$
is surjective, because this would imply surjectivity of the composition (\ref{eqn_comp}). Since
$X_0 \subset \XX_U$, we have the composition
\begin{equation}\label{eqn_comp2}
H^{2p}(\bar{\XX},\bbC) \stackrel{\beta}{\to} H^{2p}(\XX_U,\bbC) \stackrel{\gamma}{\to} H^{2p}(X_0,\bbC)^{G_0}.
\end{equation}

The line bundle $\LL$ defines a polarization on the fibres over $U$, hence the Leray spectral sequence
degenerates at $E_2$. Since the action of $\pi_1(U,b_0)$ on the cohomology of $X_0$ factors through $G_0$,
it follows that the morphism $\gamma$ is surjective. The cohomology of $\XX_U$ carries a mixed Hodge
structure, and since the Hodge structure on $H^{2p}(X_0,\bbC)$ is pure, we deduce that
the restriction of $\gamma$ to $W_{2p}H^{2p}(\XX_U,\bbC)$ is still surjective. On the other hand,
$\bar{\XX}$ is a smooth Moishezon compactification of $\XX_U$, and \cite[Corollaire (3.2.17)]{D1}
shows that $W_{2p}H^{2p}(\XX_U,\bbC)$ is the image of $\beta$. This completes the proof of (\ref{eqn_im}).
\end{proof}

\begin{defn}\label{defn_mot2}
Define the following motives:
$$
{\bf H}^{2p}(X_0)^{G_0} = \im ( \hat{\alpha}^*_0\colon {\bf H}^{2p}(\bar{\XX}) \to {\bf H}^{2p}(X_0)),
$$
$$
{\bf H}^{2p}(X_1)^{G_1} = \im ( r_{1*}\circ \hat{\alpha}^*_1\colon {\bf H}^{2p}(\bar{\XX}) \to {\bf H}^{2p}(X_1)).
$$
\end{defn}

Note that $\hat{\alpha}_0^*\circ r^* = \alpha_0^*$ and by projection formula $r_{1*}\circ \hat{\alpha}_1^*\circ r^* =
r_{1*}\circ r_1^*\circ\alpha_1^* = \alpha_1^*$, see the diagram (\ref{eqn_diagram}). Therefore, it follows from (\ref{eqn_im}) that
${\bf H}^{2p}(X_i)^{G_i}$ are the motives representing $H^{2p}(X_i,\bbC)^{G_i}$.

\subsection{Proof of Proposition \ref{prop_def}}\label{sec_def_proof}

Consider the motives $\bfH^{2p}(X_i)^{G_i}$ introduced in Definition \ref{defn_mot2}.
The two quotient maps ${\bf H}^{2p}(\bar{\XX}) \to {\bf H}^{2p}(X_i)^{G_i}$ have the same kernel by (\ref{eqn_ker}).
Hence the quotients ${\bf H}^{2p}(X_i)^{G_i}$ are canonically isomorphic, and the corresponding
isomorphism in cohomology is induced by $H^{2p}(X_i,\bbC)^{G_i} \simeq H^{2p}(B,R^{2p}\pi_*\bbC)$,
by construction.
We conclude that the isomorphism $H^{2p}(X_0,\bbC)^{G_0}\simeq H^{2p}(X_1,\bbC)^{G_1}$ lifts to
the category of Andr\'e motives. It follows that for any section $\xi\in H^0(B, R^{2p}\pi_*\bbC)$ the cohomology
class $\xi_{b_0}\in H^{2p}(X_0,\bbC)$ is motivated if and only if $\xi_{b_1}\in H^{2p}(X_1,\bbC)$ is motivated.
This finishes the proof.

\section{Motives of hyperk\"ahler manifolds}\label{sec_mot_hk}

\subsection{Moduli theory of compact hyperk\"ahler manifolds}\label{sec_moduli}
In what follows, we consider hyperk\"ahler manifolds of a fixed deformation type.
We start by recalling some necessary facts about moduli spaces of such manifolds.

First recall that two compact hyperk\"ahler
manifolds $X_1$ and $X_2$ are called deformation equivalent if there exists
a smooth analytic family $\pi\colon\XX\to B$ over a connected complex analytic
base $B$ such that all fibres are compact hyperk\"ahler manifolds and
there exist two fibres isomorphic to $X_1$ and $X_2$. In particular,
the underlying topological manifolds are diffeomorphic, and one can
show that properly normalized Beauville-Bogomolov-Fujiki forms on $X_1$
and $X_2$ are equal, see e.g. \cite[section 2.2]{So3}. Hence we can fix a
lattice $\Lambda$ representing the second integral cohomology
with the BBF form for our deformation equivalence class.

We will formulate the moduli theory in the language of marked hyperk\"ahler manifolds,
following \cite{Hu2}.
Recall that a marked hyperk\"ahler manifold is a pair $(X,\varphi)$,
where $X$ is a hyperk\"ahler manifold of the fixed deformation type
and $\varphi\colon H^2(X,\bbZ)\stackrel{\sim}{\to} \Lambda$ is an isomorphism
of lattices which is called a marking. If $\sigma\in H^0(X,\Omega_X^2)$ is the symplectic form, then
$q(\sigma) = 0$ and $q(\sigma,\bar{\sigma}) > 0$, where $q$ is the BBF form.
We denote by $\DD\subset \bbP(\Lambda\otimes\bbC)$ the period domain defined as
$\DD = \{x\in\bbP(\Lambda\otimes\bbC) \st q(x) = 0,\, q(x,\bar{x}) > 0\}$, which is an open subset
of the quadric in $\bbP(\Lambda\otimes\bbC)$ defined by the BBF form.
We define the period of a marked hyperk\"ahler manifold $(X,\varphi)$ to
be the point $\rho(X,\varphi) = [\varphi(\sigma)]\in \DD$.

We call two marked hyperk\"ahler manifolds $(X_1,\varphi_1)$ and $(X_2,\varphi_2)$
isomorphic if there exists a biholomorphic isomorphism $f\colon X_1\stackrel{\sim}{\to}X_2$
such that $\varphi_2 = \varphi_1\circ f^*$. In this case clearly $\rho(X_1,\varphi_1) = \rho(X_2,\varphi_2)$,
since $f^*$ is an isomorphism of Hodge structures. We let $\mathfrak{M}$ be the set
of isomorphism classes of marked hyperk\"ahler manifolds of the fixed deformation type.
Then $\rho$ is a well-defined map from $\mathfrak{M}$ to $\DD$ called the period map.
It is known (see e.g. \cite[Proposition 4.3]{Hu2}) that one can endow $\mathfrak{M}$
with the structure of a complex manifold in such a way that $\rho$ becomes holomorphic.
We briefly recall how to define the topology and complex analytic charts on $\mathfrak{M}$.
Let $(X,\varphi)\in \mathfrak{M}$ and consider the universal deformation $\pi\colon \XX\to \Delta^k$ of the
complex manifold $X$. Here $\Delta$ is the unit disc, $k = \dim H^1(X,T_X)$, and the fibre of
$\pi$ over zero is isomorphic to $X$. The base of the universal deformation is smooth
by the well known result of Bogomolov-Tian-Todorov. The total space $\XX$ is diffeomorphic
to the product $X\times \Delta$, and we can canonically identify $H^2(X_t,\bbZ)$ with $H^2(X_0,\bbZ) = H^2(X,\bbZ)$,
where $t\in \Delta^k$ and $X_t=\pi^{-1}(t)$. Composing with $\varphi$, we get
a marking $\varphi_t$ on $X_t$ for every $t\in \Delta^k$, hence a map $\mu\colon \Delta^k\to \mathfrak{M}$.
By the local Torelli theorem (see \cite{Be}) the differential at $0\in \Delta^k$
of the composition $\rho\circ\mu$ is an isomorphism, in particular $\rho\circ\mu$
maps some smaller polydisc $U\subset \Delta^k$ biholomorphically onto
an open subset of $\DD$. This implies that $\mu|_{U}$ is an embedding.
We endow $\mathfrak{M}$ with the finest topology for which all such
embeddings are continuous. We use the images of polydiscs under the described
embeddings as a system of complex analytic charts on $\mathfrak{M}$.

The constructed complex manifold $\mathfrak{M}$ is usually non-Hausdorff.
Recall that two points $x,y\in \mathfrak{M}$ are called inseparable if
for any non-empty open neighbourhoods $U_x$ of $x$ and $U_y$ of $y$ we have
$U_x\cap U_y \neq \emptyset$. The inseparability is not an equivalence
relation for points in arbitrary topological spaces, but it is shown
in \cite[section 4.3]{Hu2} that for $\mathfrak{M}$ it actually is.
So we can define the Hausdorff reduction $\overline{\mathfrak{M}}$ to be
the set of equivalence classes of inseparable points in $\mathfrak{M}$.
One checks that $\overline{\mathfrak{M}}$ is a Hausdorff complex manifold,
and that the period map $\rho$ factors through $\overline{\rho}\colon \overline{\mathfrak{M}}\to \DD$.

Let $\MM$ be a connected component of $\mathfrak{M}$ and let $\overline{\MM}$
be its Hausdorff reduction. One of the central results of the moduli
theory of compact hyperk\"ahler manifolds can be formulated as follows,
see \cite[Theorem 4.29]{Ve2}, \cite[Corollary 5.9]{Hu2}:

\begin{thm}[The global Torelli theorem]\label{thm_torelli}
The period map $\overline{\rho}\colon \overline{\MM} \to \DD$ is a biholomorphic isomorphism.
\end{thm}

In what follows, we will always fix a connected component $\MM$ of the moduli space
as above. Let $(X_1,\varphi_1)\in\MM$ and $X_2$ be a hyperk\"ahler manifold deformation
equivalent to $X_1$. Then $X_1$ and $X_2$ are two fibres of a smooth family $\pi\colon\XX\to B$
over a connected base. Pulling back the family $\XX$ to the universal covering $\tilde{B}$
of the base, we get a family $\tilde{\pi}\colon\tilde{\XX}\to \tilde{B}$. The space $\tilde{B}$
being simply connected, the local system $R^2\tilde{\pi}_*\bbZ$ is trivial and by parallel
transport we identify its fibres with $H^2(X_1,\bbZ)$. Composing with $\varphi_1$,
we get markings on all fibres of the map $\tilde{\pi}$, in particular on $X_2$.
We denote the latter marking by $\varphi_2$. Thus $\tilde{\XX}$ becomes a family
of marked hyperk\"ahler manifolds, and we get a holomorphic map from $\tilde{B}$
to $\mathfrak{M}$. Since $\tilde{B}$ is connected, the image of this map is
contained in $\MM$. In particular we observe that $(X_2,\varphi_2)\in\MM$.
Hence it is possible to find a marking for $X_2$, so that the corresponding marked
manifold defines a point in the same connected component $\MM$. We will use
this observation later.

Given a non-zero element $h\in \Lambda$, we denote by $\Lambda_h\subset \Lambda$
the orthogonal complement to $h$ and let $\DD_h= \DD\cap \bbP(\Lambda_h\otimes\bbC)$, $\MM_h = \rho^{-1}(\DD_h)$.
Recall the projectivity criterion for a hyperk\"ahler manifold $X$: by \cite[Theorem 3.11]{Hu1}
$X$ is projective if and only if there exists a class $a\in H^2(X,\bbZ)\cap H^{1,1}(X)$ such that $q(a) > 0$.
Equivalently, $X$ is projective if and only if there exists a line bundle $L\in\mathrm{Pic}(X)$ with $q(c_1(L)) > 0$
(although it is not always true that this line bundle is ample).
It follows that for $h\in \Lambda$ with $q(h) > 0$ all hyperk\"ahler manifolds parametrized by $\MM_h$
are projective.

Finally, let us recall some necessary facts about inseparable points in $\MM$. Consider a point $(X,\varphi)\in \MM$.
If another point $(X',\varphi')\in \MM$ is inseparable from $(X,\varphi)$, it is known from \cite[Theorem 4.3]{Hu1}
that $X$ and $X'$ are bimeromorphic.
It is possible to distinguish between different bimeromorphic
models of $X$ using their K\"ahler cones. Namely, we consider the open set $\{x\in H^{1,1}(X,\bbR)\st q(x)>0\}$
which has two connected components, because the signature of $q|_{H^{1,1}(X,\bbR)}$ is $(1,h^{1,1}(X)-1)$.
We define the positive cone $\CC_X$ to be
the connected component of this set that contains the K\"ahler cone $\mathcal{K}_X$,
the latter being the set of all cohomology classes represented by K\"ahler forms on $X$.
If $f\colon X\dashrightarrow X'$ is a bimeromorphic map, then $f^*\colon H^2(X',\bbR)\to H^2(X,\bbR)$
is well-defined, see \cite[Lemma 2.6]{Hu1}. We define the birational K\"ahler cone $\mathcal{BK}_X\subset \CC_X$
to be the union of all preimages $f^*(\mathcal{K}_{X'})$ for all bimeromorphic models $X'$ of $X$.
These preimages are the connected component of $\mathcal{BK}_X$, and $\mathcal{K}_X$ is one of them.

Below we will need a way to make sure that $\MM$ contains no points inseparable from $(X,\varphi)$.
To do this one has to show that $\mathcal{K}_X = \CC_X$, see \cite[Theorem 2.2(4)]{Ma}, because
then $\mathcal{BK}_X = \mathcal{K}_X$ and $X$ has only one bimeromorphic model. To understand when
$\mathcal{K}_X = \CC_X$, one can use the description of the boundary of $\mathcal{K}_X$ given
in \cite[Theorem 1.19]{AV}. It is shown in loc. cit. that when $\mathcal{K}_X$ is strictly
contained in $\CC_X$, then its boundary contains a face that is cut out by a hyperplane
orthogonal to a certain cohomology class $z\in H^2(X,\bbZ)\cap H^{1,1}(X)$ called an MBM class,
see \cite[Definition 1.13]{AV}. Such a class must have negative BBF square because of
the signature of $q$ mentioned above. Hence to make sure that $\mathcal{K}_X = \CC_X$
it suffices to show that there exist no negative classes in $H^2(X,\bbZ)\cap H^{1,1}(X)$.
This is the case e.g. when $\mathrm{Pic}(X) = 0$ or when $\mathrm{Pic}(X)$ is spanned
by a line bundle with first Chern class of positive BBF square, cf. \cite[Theorem 2.2(5)]{Ma}.
In particular, if $h\in \Lambda$, $q(h)>0$ and $\DD_h$ is the divisor in the period domain
introduced above, then for a very general point $p\in \DD_h$ the Picard group
of a marked manifold with period $p$ has rank one. The generator of the Picard
group of such a manifold has positive BBF square, hence the condition discussed above
is satisfied. We conclude that all points of $\MM_h$ lying over very general points of $\DD_h$
are Hausdorff. A more detailed discussion
of inseparable points can be found in \cite[section 5.3]{Ma}, see in particular
\cite[Theorem 5.16]{Ma}.

\subsection{Constructing smooth families of hyperk\"ahler manifolds}
We will need the following lemma about filling in families of hyperk\"ahler
manifolds over the punctured disc. We denote the unit disc by $\Delta$ and
the punctured disc by $\Delta^*$. If $\pi'\colon \XX'\to \Delta^*$ is a
family of hyperk\"ahler manifolds, its marking is an isomorphism
$\varphi'\colon R^2\pi'_*\bbZ \simeq \underline{\Lambda}$, where $\underline{\Lambda}$
is the constant sheaf with fibre $\Lambda$. In particular,
the monodromy action on $H^2$ is trivial for a marked family.
The marking induces a period map $\gamma'\colon \Delta^*\to \DD$.
We will say that some condition is satisfied for a very general point $t\in \Delta$,
if there exists a countable subset $Z\subset \Delta$ such that the
condition is satisfied for any $t\in \Delta\setminus Z$.

\begin{lem}\label{lem_mod}
Let $\pi\colon \XX\to \Delta$ be a flat projective morphism and
$\pi'\colon \XX'\to \Delta^*$ its restriction to $\Delta^*$.
Assume that $\pi'$ is a smooth family of marked hyperk\"ahler manifolds,
and the period map $\gamma'\colon \Delta^*\to \DD$ extends
to a morphism  $\gamma\colon \Delta\to \DD$. Assume that a very general
fibre of $\pi$ has Picard rank one. Let $(X,\varphi)$
be a marked hyperk\"ahler manifold such that $\rho(X,\varphi) = \gamma(0)$.
Then there exists a finite ramified cover $\alpha\colon \Delta\to \Delta$,
and a smooth family of hyperk\"ahler manifolds $\tilde{\pi}\colon \tilde{\XX}\to \Delta$
such that $\tilde{\XX}_0 \simeq X$ and $\alpha^*\XX|_{\Delta^*} \simeq \tilde{\XX}|_{\Delta^*}$,
after possibly shrinking $\Delta$.
Any line bundle $\LL'\in\mathrm{Pic}(\tilde{\XX}|_{\Delta^*})$
can be extended to a line bundle $\LL\in\mathrm{Pic}(\tilde{\XX})$.
\end{lem}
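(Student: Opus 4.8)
The plan is to run the standard "filling-in" argument for families of hyperkähler manifolds via the global Torelli theorem, but keeping track of the subtlety that $\pi$ is only projective (hence the relevant fibre may be \emph{a priori} inseparated from $X$). First I would take the family $\pi'\colon\XX'\to\Delta^*$ together with its marking and view the induced map $\gamma'\colon\Delta^*\to\DD$; by hypothesis it extends to $\gamma\colon\Delta\to\DD$, and $(X,\varphi)$ is a marked hyperkähler manifold with $\rho(X,\varphi)=\gamma(0)$. Using the universal property of $\MM$ (every local family with trivial $H^2$-monodromy is classified by a holomorphic map to $\MM$, well-defined up to the inseparatedness of $\MM$), lift $\gamma'$ to a holomorphic map $\mu'\colon\Delta^*\to\MM$ with $\rho\circ\mu'=\gamma'$, so that $\XX'$ (as a marked family) is the pullback of the universal family along $\mu'$. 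The point $[X,\varphi]\in\MM$ lies over $\gamma(0)$, so $\mu'$ and the constant map to $[X,\varphi]$ have the same composite with $\rho$ at the limit; since $\rho$ is a local isomorphism away from the non-Hausdorff locus, $\mu'$ extends to $\mu\colon\Delta\to\MM$ with $\mu(0)=[X,\varphi]$ \emph{provided} the limit point of $\mu'$ is not separated from $[X,\varphi]$ — which is exactly the situation the Picard-rank-one hypothesis is designed to control.

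The key step — and the main obstacle — is to show that after a finite base change $\alpha\colon\Delta^*\to\Delta^*$, $z\mapsto z^k$, the extended family $\tilde\pi\colon\tilde\XX\to\Delta$ obtained by pulling back the universal family along $\mu$ (or rather, along the lift of $\mu\circ\alpha$) actually has $\tilde\XX_0\simeq X$ and not some other birational hyperkähler model of $X$. Here is where "very general fibre has Picard rank one" enters: Huybrechts' results on the Torelli theorem say that two marked hyperkähler manifolds with the same period are \emph{inseparable} in $\MM$ precisely when they are non-separated points, and the non-separated points over a period point form the set of birational models; when the Picard lattice is rank one, there are no nontrivial birational models (a birational map between hyperkähler manifolds of Picard rank one that acts trivially on $H^2$ must be an isomorphism, as there are no wall-crossings to perform), so $\mu'$ extends \emph{uniquely} and the limit is genuinely $X$. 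The finite cover $\alpha$ is needed only to kill the (finite, since the $H^2$-monodromy is already trivial on $\Delta^*$) monodromy of $\pi'$ on the full cohomology / on a level structure, so that the pullback family $\alpha^*\XX'$ — now with trivialized extra monodromy — extends across $0$ as a smooth family; this is the usual argument that a family over $\Delta^*$ with finite monodromy becomes, after a cyclic cover, a family with trivial monodromy that fills in.

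Concretely I would proceed: (i) pass to $\alpha\colon\Delta^*\to\Delta^*$, $z\mapsto z^k$, so that $\alpha^*\XX'$ has trivial monodromy on $H^\sdot(\XX'_t,\bbZ)$ (possible since this monodromy is quasi-unipotent and, the $H^2$-part being already trivial for a marked family and the rest being controlled by a finite group as in Corollary \ref{cor_cover}, it is finite); (ii) classify $\alpha^*\XX'$ by a map $\tilde\mu'\colon\Delta^*\to\MM$ with $\rho\circ\tilde\mu'=\gamma'\circ\alpha$; (iii) invoke the global Torelli theorem together with the Picard-rank-one hypothesis on the very general fibre to conclude $\tilde\mu'$ extends to $\tilde\mu\colon\Delta\to\MM$ with $\tilde\mu(0)=[X,\varphi]$; (iv) set $\tilde\XX=\tilde\mu^*(\text{universal family})$, which is smooth since $\MM$ carries a universal family of smooth hyperkähler manifolds, and by construction $\tilde\XX_0\simeq X$ and $\tilde\XX|_{\Delta^*}\simeq\alpha^*\XX'$; (v) shrink $\Delta$ as needed so that the universal family over $\tilde\mu(\Delta)$ is well-defined (the moduli space being a not-necessarily-Hausdorff manifold, this only requires shrinking to a neighbourhood where $\tilde\mu$ lands in a Hausdorff open chart — which exists around $[X,\varphi]$ once the only competing limit has been excluded). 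The essential input throughout is Huybrechts' description of the fibres of the period map and the fact, special to Picard rank one, that those fibres are singletons.
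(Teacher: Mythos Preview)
Your overall strategy---construct a reference family from the local universal deformation of $(X,\varphi)$ and then compare it to $\XX'$---is the same as the paper's, but the comparison step (your (iii)--(iv)) has a real gap. You argue that the classifying map $\tilde\mu'\colon\Delta^*\to\MM$ of $\alpha^*\XX'$ and the lift $\tilde\mu|_{\Delta^*}$ through $[X,\varphi]$ must coincide because they agree on the dense set of very general $t$ (where the $\rho$-fibre is a singleton). This inference fails in a non-Hausdorff manifold: two continuous maps into $\MM$ that agree on a dense subset may disagree exactly at those $t_0\in\Delta^*$ where $\gamma(t_0)$ lies under non-separated points, and such $t_0$ can occur (the non-separated locus in $\MM$ is a countable union of positive-dimensional loci, so you cannot shrink $\Delta$ to avoid it). Your argument therefore does not establish $\tilde\XX|_{\Delta^*}\simeq\alpha^*\XX'$. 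Relatedly, note that $\rho$ is a local isomorphism everywhere by local Torelli, not only ``away from the non-Hausdorff locus''; the non-Hausdorffness is a global, not local, obstruction---which is precisely why the density argument breaks.

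The paper fills this gap with a genuinely different input: it invokes \cite[Theorem 0.8 and \S3]{KLSV} to produce, after a finite base change, an actual cycle $\ZZ\subset\XX\times_\Delta\YY$ (where $\YY$ is your $\tilde\XX$) inducing a fibrewise birational correspondence. The Picard-rank-one hypothesis is then used, via Markman's description of non-separated points, to see that $\ZZ_t$ is the graph of an isomorphism for very general $t$, and one concludes that $\ZZ$ gives an isomorphism of families over $\Delta^*$ after shrinking. The projectivity of $\pi$ is used through KLSV (minimal model/degeneration techniques), which your purely moduli-theoretic approach never invokes; and the finite cover in the statement comes from the KLSV construction, not from trivialising higher monodromy as you suggest.
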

\begin{proof}
The statement is essentially equivalent to \cite[Theorem 0.8]{KLSV}.
Following the argument from \cite[section 3]{KLSV}, we pull back
the universal family of $X$ via $\gamma$ and obtain a smooth
family of hyperk\"ahler manifolds $\xi\colon \YY\to \Delta$
with central fibre $X$. There exists a finite covering $\alpha\colon \Delta\to \Delta$,
and a cycle $\ZZ\subset \alpha^*\XX\times_\Delta\alpha^*\YY$ that induces a
birational isomorphism between $\alpha^*\XX$ and $\alpha^*\YY$ over $\Delta$.
We define $\tilde{\XX} = \alpha^*\YY$.

The local system $R^2\xi_*\bbZ$ is trivial and by parallel transport we
identify its fibres with $H^2(\YY_0,\bbZ)\simeq H^2(X,\bbZ)$. Using the marking $\varphi$
we then obtain an isomorphism $R^2\xi_*\bbZ\simeq \underline{\Lambda}$, i.e. $\YY$
becomes a family of marked hyperk\"ahler manifolds. For $t\in \Delta^*$ let us
denote by $(\XX_t,\varphi_t)$ and $(\YY_t,\varphi'_t)$ the fibres of the two
families $\XX$ and $\YY$ with the induced markings.
Thus we obtain two maps from $\Delta^*$ to $\MM$ whose compositions with
the period map $\rho$ are equal, by construction. Recall that we denote by $\MM$
one connected component of the moduli space of marked hyperk\"ahler manifolds,
and that by the global Torelli Theorem \ref{thm_torelli} $\rho$ induces
an isomorphism between the Hausdorff reduction of $\MM$ and $\DD$.
It follows that for $t\in \Delta^*$ the marked manifolds $(\XX_t,\varphi_t)$ and $(\YY_t,\varphi'_t)$
represent either the same point in $\MM$ or a pair of inseparable points.
By our assumption, for a very general $t\in \Delta^*$ the Picard group
of $\XX_t$ is generated by an ample line bundle, hence the K\"ahler cone of $\XX_t$ coincides with a
connected component of the positive cone. This implies that $\MM$ contains no
inseparable points over $\gamma(t)$, see the discussion at the end of section \ref{sec_moduli}
or \cite[section 5.3 and Theorem 5.16]{Ma}.
Hence the cycle $\ZZ_s$ is the graph of an isomorphism between $\XX_{\alpha(s)}$ and $\YY_{\alpha(s)}$ for a very general $s\in \Delta$.
The subset of $s\in \Delta$ for which $\ZZ_s$ defines an isomorphism of the fibres is Zariski-open.
This implies that the families $\alpha^*\XX|_{\Delta^*}$ and $\tilde{\XX}|_{\Delta^*}$
are isomorphic, after possibly shrinking $\Delta$.

To prove the last claim of the lemma, note that we have the following isomorphism
$$\mathrm{Pic}(\tilde{\XX})\simeq \mathrm{ker}(H^0(\Delta, R^2\tilde{\pi}_*\bbZ)\to H^0(\Delta, R^2\tilde{\pi}_*\OO_{\tilde{\XX}})).$$
This isomorphism follows from the exponential exact sequence using the fact that $\Delta$
is a Stein manifold and the fibres of $\tilde{\pi}$ are simply connected.
The local system $(R^2\tilde{\pi}_*\bbZ)|_{\Delta^*}$ is trivial, and its section that defines $\LL'$ extends to a section
of $R^2\tilde{\pi}_*\bbZ$. This extension still lies in the kernel on the right hand
side of the above formula, because the sheaf $R^2\tilde{\pi}_*\OO_{\tilde{\XX}}$ is locally free.
Thus we get a line bundle $\LL\in \mathrm{Pic}(\tilde{\XX})$ that extends $\LL'$.
\end{proof}

As we recalled above, given a hyperk\"ahler manifold $X_1$,
we can choose a marking $\varphi_1\colon H^2(X_1,\bbZ) \stackrel{\sim}{\to} \Lambda$
such that $(X_1,\varphi_1) \in \MM$. Assume that $X_1$ is projective with a very ample line
bundle $L$, and let $h = \varphi_1(c_1(L))$. Then $(X_1,\varphi_1) \in \MM_h$.

\begin{prop}\label{prop_curve}
In the above setting, assume that we have another marked hyperk\"ahler manifold
$(X_2,\varphi_2) \in \MM_h$. Then there exists a connected quasi-projective curve $C$,
a smooth analytic family of hyperk\"ahler manifolds $\pi\colon \XX\to C$ and two points $x_1,x_2\in C$
such that  $X_i\simeq \pi^{-1}(x_i)$ and $\pi$ is a projective morphism of algebraic varieties over $C\setminus \{x_2\}$.
There exists a line bundle $\LL\in\mathrm{Pic}(\XX)$ that is algebraic and $\pi$-ample over $C\setminus \{x_2\}$.
\end{prop}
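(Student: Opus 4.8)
The plan is to realize the two manifolds as fibres of a single family over a curve by combining two ingredients: the openness of the period domain slice $\DD_h$ and the filling-in Lemma~\ref{lem_mod}. First I would use the fact that $\MM_h$ is a connected (complex) manifold of positive dimension — this follows from the surjectivity of the period map and the Torelli theorem for hyperk\"ahler manifolds, since $\DD_h$ is connected once $\Lambda_h$ has signature $(2,b_2-3)$ — to connect the points $(X_1,\varphi_1)$ and $(X_2,\varphi_2)$ by a chain of analytic discs inside $\MM_h$. Along such a generic disc the Picard rank of a very general fibre is exactly one (the generic point of $\DD_h$ has Picard lattice $\bbZ h$), so the K\"ahler cone condition in Lemma~\ref{lem_mod} is satisfied. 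This reduces the problem to the case where $(X_1,\varphi_1)$ and $(X_2,\varphi_2)$ lie on a single such disc.

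Next I would promote the disc to a quasi-projective curve. Since $h$ stays algebraic along the disc and $q(h)>0$, every fibre carries an ample class, so the family of periods is a family of polarized hyperk\"ahler manifolds; by the work of Viehweg (boundedness and quasi-projectivity of moduli of polarized varieties), there is a quasi-projective fine moduli space, possibly after passing to a level cover, carrying a projective universal family. Pulling this universal family back along a general quasi-projective curve $C\to \MM_h$ through the images of $(X_1,\varphi_1)$ and $(X_2,\varphi_2)$ gives a smooth \emph{projective} family $\XX\to C$ through both manifolds; in this situation one may even take $\pi$ projective over all of $C$, and the statement of the proposition (projective over $C\setminus\{x_2\}$) is trivially satisfied. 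The subtlety the paper flags, however, is that $(X_2,\varphi_2)$ may fail to lie on the same component of $\MM_h^{\mathrm{alg}}$ as $(X_1,\varphi_1)$ through which one can run an \emph{algebraic} curve: the algebraic locus inside $\MM_h$ where a \emph{second} class becomes algebraic is a countable union of proper analytic subsets, so a general algebraic curve through $(X_1,\varphi_1)$ will hit such loci only at isolated points, and $(X_2,\varphi_2)$ need not be reachable by a curve staying in the rank-one locus.

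This is where Lemma~\ref{lem_mod} enters, and handling it is the main obstacle. The strategy is: take a general quasi-projective curve $C^\circ$ inside the polarized moduli space passing through the image of $(X_1,\varphi_1)$, giving a smooth projective family over $C^\circ$; then one wants to add a single point $x_2$ over which the fibre is $X_2$. By the chain-of-discs argument above, $(X_2,\varphi_2)$ lies at the centre of an analytic disc $\gamma\colon\Delta\to\DD_h$ whose punctured part lifts to a family with unipotent (in fact trivial, after the marking) monodromy; arrange that the restriction $\gamma|_{\Delta^*}$ factors, after a finite base change, through $C^\circ$ — this uses that $C^\circ$, being general, is Zariski-dense enough in the relevant period directions and that the very general fibre over it has Picard rank one. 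Then $\pi'\colon \XX'\to\Delta^*$ (the pullback of the family over $C^\circ$) is projective, its period map extends to $\gamma\colon\Delta\to\DD$, and a very general fibre has Picard rank one, so Lemma~\ref{lem_mod} produces a smooth family $\tilde{\XX}\to\Delta$ with $\tilde{\XX}_0\simeq X_2$ agreeing with $\XX'$ after a finite cover of $\Delta^*$. Gluing $\tilde{\XX}$ to the family over $C^\circ$ along this punctured disc (after the necessary finite base change of $C^\circ$, which keeps it a quasi-projective curve) yields the desired $\pi\colon\XX\to C$: it is smooth with hyperk\"ahler fibres everywhere, contains $X_1$ and $X_2$ as fibres, and is projective away from the single added point $x_2$, since the projectivity over $C^\circ$ is preserved while the filled-in fibre $X_2$ carries no a priori polarization compatible with the family. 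The delicate points to check are that the finite base change and gluing can be performed in the quasi-projective category (Lemma~\ref{lem_mod} is local-analytic, so one must argue that the algebraic structure on $C^\circ$ extends over $x_2$, which follows from Artin-type algebraization or simply from taking the normalization of $C^\circ$ in the function field of $\tilde\XX|_{\Delta^*}$), and that the monodromy of the glued family around $x_2$ is finite, so that after a further finite cover of $C$ one genuinely has a smooth morphism — this is exactly the content of the finite cover $\alpha$ in Lemma~\ref{lem_mod}.
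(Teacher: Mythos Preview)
Your overall architecture matches the paper's: build a projective family over a quasi-projective curve containing $X_1$, then use Lemma~\ref{lem_mod} to glue in $X_2$ over one added point. But there is a real gap in the step where you connect the curve to $X_2$. You write: ``arrange that the restriction $\gamma|_{\Delta^*}$ factors, after a finite base change, through $C^\circ$ --- this uses that $C^\circ$, being general, is Zariski-dense enough in the relevant period directions.'' This does not work. The image of $C^\circ$ under the period map is a curve inside the quasi-projective variety $\DD_h/\Gamma$; a general curve through $p_1$ has no reason whatsoever to pass through (or even approach) $p_2 = \rho(X_2,\varphi_2)$, and ``Zariski-density'' of a curve in a higher-dimensional space is meaningless. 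Without $p_2$ lying on the period image of $C^\circ$, Lemma~\ref{lem_mod} gives you nothing: its hypothesis is that the period map of the punctured disc extends to $\gamma(0)=\rho(X_2,\varphi_2)$, so you must already have a family whose periods converge to $p_2$.

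The paper resolves this by reversing the order of construction. First choose a smooth quasi-projective curve $C_1 \subset \DD_h/\Gamma$ through \emph{both} $p_1$ and $p_2$ (easy, since $\DD_h/\Gamma$ is quasi-projective); one may also take $C_1$ generic enough that a very general point has Picard rank one. Then, instead of invoking Viehweg-type moduli, the paper uses the Hilbert scheme: embed $X_1 \hookrightarrow \bbP^N$ via $L$, take the component $\HH$ of $\mathrm{Hilb}^P(\bbP^N)$ through $[X_1]$, and use that the period map $\mu\colon U' \to \DD_h/\Gamma$ from (a finite cover of) the smooth locus is \emph{dominant}. Dominance lets you lift $C_1$ to a curve $C_2\subset U'$ mapping dominantly onto $C_1$; normalizing $C_1$ in the function field of $C_2$ gives a finite cover $\nu\colon C_3 \to C_1$ with a rational map to $\HH$, which extends to a morphism since $\HH$ is projective. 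Now $\XX' = C_3 \times_\HH \tilde{\XX}$ is a projective family over $C_3$ whose fibre over a preimage $q_1$ of $p_1$ is $X_1$ and whose periods over a preimage $q_2$ of $p_2$ converge to $\rho(X_2,\varphi_2)$ by construction. Only then does one apply Lemma~\ref{lem_mod} near $q_2$, pass to a finite ramified cover $C_4\to C_3$, and glue. The use of the Hilbert scheme also sidesteps the foundational issues you raise about fine polarized moduli with universal families.
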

\begin{proof}
We embed $X_1$ into $\bbP^N = \bbP H^0(X_1,L)$ and denote by $P$ its Hilbert polynomial.
Let $\HH$ be the Hilbert scheme $\mathrm{Hilb}^P(\bbP^N)$ with the reduced scheme structure
and let $\psi\colon\tilde{\XX}\to \HH$ be the restriction to $\HH$ of the universal family.
Let $U\subset \HH$ be the open subset over which the morphism $\psi$ is smooth.
If $U$ is disconnected, we replace it by the connected component containing $[X_1]$.
Let $\mu\colon\tilde{U}\to U$ be the universal covering, and let 
$\psi'\colon\tilde{\XX}'\to \tilde{U}$ be the pull back of the family $\tilde{\XX}$ to $\tilde{U}$.
The local system $R^2\psi'_*\bbZ$ is trivial and by parallel transport its fibres
can be identified with $H^2(X_1,\bbZ)$. Using the marking $\varphi_1$
we identify the fibres of the latter local system with $\Lambda$, and the
relative Hodge to de Rham spectral sequence induces an embedding of vector
bundles $\rho\colon\psi'_*(\Omega^2_{\tilde{\XX}'/\tilde{U}})\hrarr \Lambda \otimes \OO_{\tilde{U}}$.
The family $\tilde{\XX}'$ is polarized, the class of the polarization being $h\in \Lambda$,
hence $\rho$ factors via $\Lambda_h\otimes \OO_{\tilde{U}}$. So $\psi'_*(\Omega^2_{\tilde{\XX}'/\tilde{U}})$
is a rank one subbundle of $\Lambda_h\otimes \OO_{\tilde{U}}$, and this gives us
a period map from $\tilde{U}$ to $\DD_h$. We can find a torsion-free
arithmetic subgroup $\Gamma\subset {\rm O}(\Lambda_h,q)$ and a finite covering $U'\to U$
such that the period map descends to a morphism $\mu\colon U'\to \DD_h/\Gamma$ of quasi-projective
varieties. Note that the morphism $\mu$ is dominant (see e.g. the proof of \cite[Lemma 4.5]{So1}).

Let $p_1$ and $p_2$ be the images of $(X_1,\varphi_1)$ and $(X_2,\varphi_2)$ in $\DD_h/\Gamma$.
By construction, $p_1 \in \im(\mu)$. We can find a smooth quasi-projective curve $C_1\subset \DD_h/\Gamma$
such that $p_1,p_2\in C_1$. For a very general point of $\DD_h$, the corresponding hyperk\"ahler
manifold has Picard group of rank one (generated by $h$), and we can assume that the same
is true for a very general point of $C_1$. Since $\mu$ is dominant, there exists
a curve $C_2\subset U'$ that maps dominantly to $C_1$. Taking the normalization of $C_1$ in
the function field of $C_2$, we get a curve $C_3$ and a finite morphism $\nu\colon C_3\to C_1$.
By construction, there exists a rational map from $C_3$ to $\HH$. Since $\HH$ is a projective
variety, this map extends to a morphism $\xi\colon C_3\to \HH$.
We obtain a projective family $\XX' = C_3 \times_\HH \tilde{\XX}$ over $C_3$.
$$
\begin{tikzcd}
\, & C_3 \arrow[dashed]{d}\arrow{r}{\nu} & C_1 \arrow[hook]{d} \\
\HH \arrow[leftarrow]{ur}{\xi}\arrow[leftarrow]{r} & U' \arrow{r}{\mu} & \DD_h/\Gamma
\end{tikzcd}
$$

Let $q_1, q_2\in C_3$ be two points with $\nu(q_i) = p_i$. Note that the fibre of $\XX'$ over
$q_1$ is isomorphic to $X_1$ by construction. The fibre over $q_2$ might be non-smooth or
not isomorphic to $X_2$. We use Lemma \ref{lem_mod} to modify the family
$\XX'$ over a disk around $q_2$, and produce a new family with fibre $X_2$.
Let $\Delta\subset C_3$ and $\Delta'\subset C_1$ be two small disks around $q_2$ and $p_2$
such that $\nu(\Delta)\subset \Delta'$ and the covering map $\DD_h\to \DD_h/\Gamma$
splits over $\Delta'$. Let $\Delta''\subset \DD_h$ map isomorphically onto $\Delta'$ under
the covering map. We obtain a morphism $\gamma\colon \Delta\to \Delta''\subset \DD_h$.
Let $\Delta^* = \Delta\setminus \{q_2\}$ and note that $\gamma|_{\Delta^*}$ is the period map for $\XX'|_{\Delta^*}$
for some choice of the marking. We can now apply Lemma \ref{lem_mod}. After passing to
some finite ramified cover $\alpha\colon C_4 \to C_3$, we obtain a smooth
family $\YY\to \alpha^{-1}(\Delta)$ with central fibre $X_2$ such that its restriction
to $\alpha^{-1}(\Delta^*)$ is isomorphic to the restriction of $\XX'' = C_4\times_{C_3}\XX'$. We modify
$\XX''$ over $\alpha^{-1}(\Delta)$ by gluing in $\YY$, and obtain
a new family $\XX\to C_4$ that contains both $X_1$ and $X_2$ as fibres. We restrict to an
open subset $C\subset C_4$ over which this family is smooth. Since the family $\XX''$ is projective,
there exists a relatively ample line bundle $\LL''\in \mathrm{Pic}(\XX'')$. Using the last claim
in Lemma \ref{lem_mod}, after gluing in $\YY$ we get a line bundle $\LL\in \mathrm{Pic}(\XX)$ whose
restriction to all fibres except possibly $X_2$ is ample. This completes the proof.
\end{proof}

\begin{lem}\label{lem_motives}
Assume that $\pi\colon \XX\to C$ is a smooth family of hyperk\"ahler manifolds
such that: $C$ is a smooth quasi-projective curve; all fibres of $\pi$ are projective;
$\pi$ is a projective morphism of algebraic varieties over a dense Zariski-open subset $U\subset C$;
there exits a line bundle $\LL\in\mathrm{Pic}(\XX)$ that is algebraic and $\pi$-ample over U.
Assume that the Andr\'e motive of the fibre $\XX_{b_0}$ is abelian for some $b_0\in C$.
Then for any $b_1\in C$ the Andr\'e motive of $\XX_{b_1}$ is abelian.
\end{lem}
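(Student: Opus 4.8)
The plan is to reduce Lemma \ref{lem_motives} to the already-established Corollary \ref{cor_cover} and Proposition \ref{prop_def}, using the Kuga-Satake construction to transfer the property ``abelian motive'' along the family. The key observation is that for a hyperk\"ahler manifold $X$, the statement that $\bfM(X)$ is abelian is equivalent to the statement that the Kuga-Satake embedding $\nu_0\colon H^{2n}(X,\bbQ)(n)\hrarr \Lambda^{2d}(H^{*\oplus m})(d)$ from Theorem \ref{thm_ks} is motivated: indeed, if $\bfM(X)\in\catMab$ then all its cohomology is motivated and in particular the Kuga-Satake class is; conversely, if the Kuga-Satake class is motivated then $\bfH^2(X)$, hence $\frso(\tilde V,\tilde q)$ acting on all the $\bfH^k(X)$, is cut out inside the motive of an abelian variety, so $\bfM(X)$ is abelian. (One should state this equivalence carefully, perhaps as a separate lemma, since it is the conceptual heart of the argument.)

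First I would apply Corollary \ref{cor_cover}(1)--(2) to the smooth family $\pi\colon\XX\to C$: after replacing $C$ by a finite \'etale cover $C'$ (which does not affect the conclusion, since a point of $C$ with abelian-motive fibre pulls back to such points of $C'$, and every point of $C'$ lies over a point of $C$), we obtain a smooth family of complex tori $\psi\colon\AA\to C'$ together with monodromy-equivariant embeddings $\bar\nu_i\colon R^{i+2n}\pi'_*\bbQ(n)\hrarr R^{i+2d}\psi_*\bbQ(d)$ of variations of Hodge structures. In particular the section defining $\bar\nu_0$ is a global section $\xi\in H^0(C',R^{2N}p_*\bbC)$ over the fibre product of the two families, i.e.\ of the relative cohomology of $\XX'\times_{C'}\AA$, whose fibre-wise components are Hodge classes. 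The point is that ``$\bar\nu_0$ at the fibre over $b$ is motivated'' is exactly ``$\bfM(\XX'_b)$ is abelian'' by the equivalence above, \emph{provided} the relevant torus $\AA_b$ is in fact an abelian variety so that its motive is available. This is where Corollary \ref{cor_cover}(3) is needed: over the dense Zariski-open $U\subset C$ on which $\pi$ is projective, the relative polarization supplies a monodromy-invariant class $h$ with $q(h)>0$, so we may arrange $\psi$ to be a \emph{projective} family of abelian varieties over (the preimage of) $U$, and the total space $\XX'_U\times_U\AA_U$ fits the hypotheses of the setting \ref{sec_set}.

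Then I would run Proposition \ref{prop_def} on the family $\XX'\times_{C'}\AA\to C'$, with the section $\xi$ defining $\bar\nu_0$: given $b_0\in C$ with $\bfM(\XX_{b_0})$ abelian, the class $\xi_{b_0}$ is motivated by the equivalence, hence by Proposition \ref{prop_def} (whose proof already handles exactly the situation of a family projective only over a dense open subset) the class $\xi_{b_1}$ is motivated for every $b_1$; invoking the equivalence in the reverse direction gives $\bfM(\XX_{b_1})\in\catMab$. The main obstacle, and the step requiring the most care, is the equivalence ``Kuga-Satake class motivated $\Leftrightarrow$ motive abelian'': the forward direction is formal, but the reverse direction requires knowing that the $\frso(\tilde V,\tilde q)$-module structure on $\oplus_k H^k(X,\bbQ)$ used in Theorem \ref{thm_ks} is itself given by motivated correspondences once one knows $\bfH^2$ is a submotive of an abelian motive --- this uses that the Lefschetz $\frsl_2$-triples and the $*$-operators entering the generators of $\frgt$ are motivated (property (5) of motivated classes and the very definition of $H^\sdot_M$ via the operator $*_h$), together with the Verbitsky--Looijenga--Lunts identification $\frgt\simeq\frso(\tilde V,\tilde q)$. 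A secondary point to get right is that passing to the finite \'etale cover $C'$ is harmless, and that one genuinely only needs the projectivity of $\psi$ over the open set $U$, matching precisely what Proposition \ref{prop_def} demands.
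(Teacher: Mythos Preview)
Your overall strategy matches the paper's proof: pass to a finite \'etale cover so that the monodromy factors through $\Spin(V,q)$, build the Kuga-Satake family $\psi\colon\AA\to C'$, and apply Proposition~\ref{prop_def} to the Kuga-Satake classes viewed as global sections on the fibre product $\YY=\XX'\times_{C'}\AA$. Two points, however, need correction.

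First, you restrict attention to the single section $\bar\nu_0$ and then propose to bootstrap to all of $\bfM(\XX_{b_1})$ via the $\frgt$-module structure; you yourself flag this reverse implication as the ``main obstacle''. It is in fact unnecessary, and as stated it does not work: the Lie algebra $\frgt\simeq\frso(\tilde V,\tilde q)$ shifts cohomological degree only by even integers, so from $\bfH^{2n}(\XX_{b_1})$ you cannot reach the odd-degree pieces at all (and these are nonzero e.g.\ for generalized Kummer varieties). The paper bypasses this entirely by applying Proposition~\ref{prop_def} to \emph{all} the sections $\bar\nu_i$, $i=-2n,\dots,2n$: each is motivated at $b_0$ because $\YY_{b_0}=\XX_{b_0}\times\AA_{b_0}$ has abelian motive and hence every Hodge class on it is motivated, so each $\bar\nu_{i,b_1}$ is motivated, giving directly a motivated embedding of every $\bfH^{k}(\XX_{b_1})$ into the motive of $\AA_{b_1}$. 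No equivalence of the form ``$\nu_0$ motivated $\Leftrightarrow$ motive abelian'' is ever invoked.

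Second, your claim that ``one genuinely only needs the projectivity of $\psi$ over the open set $U$'' is wrong: the setting~\ref{sec_set} underlying Proposition~\ref{prop_def} requires \emph{every} fibre to be projective, so each $\AA_b$ must be an abelian variety, not merely a complex torus, over all of $C'$. Fortunately this is available: the line bundle $\LL$ in the hypotheses lives on all of $\XX$, so $h=c_1(\LL|_{\XX_{b_0}})$ is invariant under the full group $\pi_1(C,b_0)$ (not just $\pi_1(U,b_0)$) with $q(h)>0$, and Corollary~\ref{cor_cover}(3) then makes $\psi$ projective over the whole base. This is exactly how the paper argues.
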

\begin{proof}
Using part (1) of Corollary \ref{cor_cover} and possibly replacing $C$ by a finite cover,
we may assume that $\pi_1(C,b_0)$ acts on $H^{\sdot}(\XX_{b_0},\bbQ)$ via a homomorphism
$\rho\colon \pi_1(C,b_0)\to \Spin(V,q)_\bbQ$, where $V = H^2(\XX_{b_0},\bbQ)$
and $q$ is the Beauville-Bogomolov-Fujiki form.
Since the morphism $\pi$ is projective over an open subset of $C$, there exists
a line bundle defining the polarization. The first Chern class of this line
bundle gives a monodromy-invariant element $h\in V$ such that $q(h)>0$.
Hence we can use parts (2) and (3) of Corollary \ref{cor_cover}
to obtain a projective family $\psi\colon \AA\to C$ of Kuga-Satake abelian varieties.

Consider the product $\YY = \XX\times_C\AA$ and denote by $\xi\colon\YY \to C$ the induced morphism.
The embeddings $\bar{\nu_i}$ from Corollary \ref{cor_cover} can be viewed as global
sections of the local system $R^{2n+2d}\xi_*\bbQ(n+d)$. We need to prove that the
corresponding cohomology class $\bar{\nu}_{i,b_1}\in H^{2n+2d}(\XX_{b_1} \times \AA_{b_1},\bbQ(n+d))$
is motivated. By Proposition \ref{prop_def}, it is enough to prove that
$\bar{\nu}_{i,b_0}$ is motivated. But the fibre $\YY_{b_0}\simeq \XX_{b_0} \times \AA_{b_0}$
has abelian Andr\'e motive, hence any cohomology class on it is motivated by \cite[section 6]{A1}.
\end{proof}

\subsection{Proof of theorem \ref{thm_main}}\label{sec_thm}
We use the notation introduced in section \ref{sec_moduli}.
We choose two markings $\varphi_i\colon H^2(X_i,\bbZ)\stackrel{\sim}{\to} \Lambda$, so that
$(X_i,\varphi_i)\in \MM$. We choose very ample line bundles on $X_i$ and denote by $h_i\in \Lambda$ their classes.
We use Proposition \ref{prop_curve} to connect $X_1$ and $X_2$ by several smooth
families of hyperk\"ahler manifolds and then apply Lemma \ref{lem_motives} to these families.
We connect $X_1$ to $X_2$ in several steps, depending on the relative position of
the divisors $\DD_{h_1}$ and $\DD_{h_2}$ inside $\DD$.

{\it Case 1.} Assume that $\rho(X_1,\varphi_1)\in \DD_{h_1}\cap \DD_{h_2}$ or $\rho(X_2,\varphi_2)\in \DD_{h_1}\cap \DD_{h_2}$.
In this case we can apply Proposition \ref{prop_curve} to construct a family connecting
$X_1$ and $X_2$.

{\it Case 2.} Assume that $\mathrm{sign}(q|_{\langle h_1,h_2\rangle}) = (1,1)$. This condition implies
that $\DD_{h_1}\cap \DD_{h_2} \neq \emptyset$. By the surjectivity of
the period map $\rho\colon \MM\to \DD$ (see \cite[Theorem 5.5]{Hu2}), we can pick $(X_3,\varphi_3)\in \MM$
such that $\rho(X_3,\varphi_3)\in \DD_{h_1}\cap \DD_{h_2}$ and reduce to {\it Case 1} above.

{\it Case 3.} Assume the $q|_{\langle h_1,h_2\rangle}$ is positive definite.
In this case $\DD_{h_1}\cap \DD_{h_2} = \emptyset$, but we will find $h_3\in \Lambda$ such that $q(h_3) > 0$ and
$\DD_{h_1}\cap \DD_{h_3} \neq \emptyset$, $\DD_{h_2}\cap \DD_{h_3} \neq \emptyset$, reducing
to {\it Case 2}. Consider the set
$$
\VV = \{v\in \Lambda\otimes\bbR\st q(v)>0,\, \mathrm{sign}(q|_{\langle h_1,v\rangle}) = \mathrm{sign}(q|_{\langle h_2,v\rangle}) = (1,1)\}.
$$
This set is an open cone in $\Lambda\otimes\bbR$, and it suffices to prove that $\VV\neq \emptyset$.
Choose three vectors $e_1,e_2,e_3 \in \Lambda\otimes\bbR$ such that: $q(e_1)=q(e_2)=1$, $q(e_3)=-1$;
$e_i$ are pairwise orthogonal; $h_1 = a e_1$, $h_2 = b e_1 + c e_2$ for some $a,b,c\in \bbR$.
If $b = 0$, then $v = e_1 + e_2 + d e_3 \in \VV$ for $1 < d^2 < 2$. If $b\neq 0$, then
$v = b e_1 + c e_2 + d e_3\in \VV$ for $c^2 < d^2 < b^2+c^2$.

{\it Case 4.} Assume the $q|_{\langle h_1,h_2\rangle}$ is degenerate. Then we will find
$h_3\in \Lambda$ such that $q(h_3) > 0$ and the restrictions $q|_{\langle h_1,h_3\rangle}$,
$q|_{\langle h_2,h_3\rangle}$ are non-degenerate, reducing to the previous cases.
Consider the set
$$
\VV = \{v\in \Lambda\otimes\bbR\st q(v)>0,\, q|_{\langle h_1,v\rangle} \mbox{ and } q|_{\langle h_2,v\rangle} \mbox{ non-degenerate}\}.
$$
As above, $\VV$ is an open cone and we need to check that $\VV\neq\emptyset$.
The condition that $q|_{\langle h_i,v\rangle}$ is degenerate is given by the vanishing of the
determinant of the Gram matrix, hence it defines a hypersurface in $\Lambda\otimes \bbR$.
Since the set $\{v\in \Lambda\otimes \bbR\st q(v)>0\}$ is clearly open and non-empty,
$\VV$ is also non-empty.
This completes {\it Case 4},
and the proof of the theorem.

\section*{Acknowledgements}

Thanks to Ben Moonen for communicating to me the problem of
lifting the Kuga-Satake embedding to the category of Andr\'e motives,
to Salvatore Floccari for his interest in this work
and for pointing out the relevance of \cite[Theorem 5.1]{Fl} for Corollary \ref{cor_MT},
to Giovanni Mongardi for the useful discussion of O'Grady's manifolds.

\end{document}